\documentclass[a4paper,11pt]{article}

\usepackage[top=3.0cm, bottom=3.0cm, inner=3.0cm, outer=3.0cm,
includefoot]{geometry}

\usepackage{verbatim}
\usepackage{caption}
\usepackage{url}
\usepackage{amsmath}
\usepackage{geometry}
\usepackage{amssymb}
\usepackage{amsmath}
\usepackage{graphicx}
\usepackage{amsthm}
\usepackage{bbm}
\usepackage{float}
\usepackage{color,soul}
\usepackage{hyperref}
\usepackage[T1]{fontenc}
\usepackage[utf8]{inputenc}
\usepackage{authblk}
\usepackage{booktabs}
\usepackage{longtable}
\usepackage{enumerate}
\usepackage{tikz}
\usetikzlibrary{arrows}
\usetikzlibrary{decorations.markings}
\newcommand{\eChar}{\begin{enumerate}[(i)]}
\newcommand{\eCharR}{\begin{enumerate}[(a)]}
\newcommand{\eBr}{\begin{enumerate}[(1)]}

\title
{On Lichnerowicz sharp distance-regular graphs}

\author[1]{Kaizhe Chen}
\author[2]{Shiping Liu}
\author[3]{Heng Zhang}
\affil[1]{School of the Gifted Young, University of Science and Technology of China}
\affil[2,3]{School of Mathematical Sciences, University of Science and Technology of China} 
\affil[1]{ckz22000259@mail.ustc.edu.cn}
\affil[2]{spliu@ustc.edu.cn}
\affil[3]{hengz@mail.ustc.edu.cn}
\date{}

\theoremstyle{plain}
\newtheorem{lemma}{Lemma}[section]
\newtheorem{theorem}[lemma]{Theorem}

\newtheorem{corollary}[lemma]{Corollary}

\theoremstyle{definition}

\newtheorem{definition}[lemma]{Definition}
\newtheorem*{claim}{Claim}
\makeatletter
\renewenvironment{claim}[1][\unskip]{%
  \ifx#1\unskip
    \def\@thmopt{}%
  \else
    \def\@thmopt{#1 }%
  \fi
  \par\addvspace{\topsep}  % 添加垂直间距
  \noindent\textbf{Claim \@thmopt}\itshape
}{%
  \par\addvspace{\topsep}  % 结束时的间距
}
\makeatother

\newtheorem{remark}[lemma]{Remark}

\begin{document}

\maketitle

\begin{abstract}
The first non-zero Laplacian eigenvalue $\lambda_1$ of a finite graph is bounded below by its minimum Lin--Lu--Yau curvature $\kappa$. This is a discrete analogue of the classical Lichnerowicz Theorem. A graph with $\lambda_1=\kappa$ is called Lichnerowicz sharp.
In this note, we give a new proof of the classification of Lichnerowicz sharp distance-regular graphs, which was first obtained by Münch and strengthens the corresponding classification by Cushing, Kamtue, Koolen, Liu, M\"unch, and Peyerimhoff, which required an extra spectral condition.
As a key preparatory step, we provide a classification of all amply regular Terwilliger graphs with positive Lin--Lu--Yau curvature, a result that is interesting in its own right.
\end{abstract}

\section{Introduction}
Ricci curvature plays a central role in Riemannian geometry. Extending this idea to discrete settings has led to valuable perspectives and techniques in graph theory (see \cite{NR17} and the references therein). 
In this note, we focus on the Lin--Lu--Yau curvature for graphs \cite{LLY11}, which is built upon Ollivier's earlier work \cite{Ollivier09, Ollivier10}.
For two vertices $x$ and $y$, the Lin--Lu--Yau curvature $\kappa(x,y)$ compares the distance between $x$ and $y$ with the optimal transport distance between their respective neighborhoods (see Definition \ref{def:LLY}). 
This curvature notion has inspired numerous studies, see e.g. \cite{CLY, CKKLP20, DOJ19, M23, MS23, MW19, S22, Smith}.

Throughout this note, all graphs are assumed to be simple and finite.
The normalized Laplacian $L$ of a graph $G$ is defined as follows, 
\[L=I-D^{-\frac{1}{2}}AD^{-\frac{1}{2}},\] 
where $A$ stands for the adjacency matrix of $G$ and $D$ is the diagonal matrix of vertex degrees.
The following eigenvalue estimate in terms of Lin--Lu--Yau curvature is a discrete analogue of the well-known Lichnerowicz Theorem in Riemannian geometry.
\begin{theorem}[Discrete Lichnerowicz Theorem \cite{LLY11,Ollivier09}]\label{theorem:discreteBM} Let $G$ be a connected graph. Let $\lambda_1$ be the first nonzero eigenvalue of the normalized Laplacian $L$. 
    Then we have
    \begin{align}\label{Lsharp}
        \lambda_1\geq \min_{xy\in E}\kappa(x,y),
    \end{align}
   where $\kappa(x,y)$ is the Lin--Lu--Yau curvature of $xy$.
\end{theorem}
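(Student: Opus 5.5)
The plan is the standard contraction argument for the random walk operator. Let $P=D^{-1}A$ be the transition matrix of the simple random walk, so $Pf(x)=\frac{1}{d_x}\sum_{y\sim x}f(y)=\int f\,d\mu_x$. Here $\mu_x$ is the uniform measure on the neighbours of $x$. The normalized Laplacian $L$ is similar to $I-P$, via $L=D^{1/2}(I-P)D^{-1/2}$. Hence $1-\lambda_1$ is an eigenvalue of $P$, with a real eigenfunction $f$ orthogonal to constants in $\ell^2(D)$; in particular $f$ is non-constant. Write $\kappa:=\min_{xy\in E}\kappa(x,y)$. If $\kappa\le 0$ there is nothing to prove, since $\lambda_1>0$ for a connected graph. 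So assume $\kappa>0$.

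The key step is a Lipschitz contraction estimate. By Definition \ref{def:LLY}, $\kappa(x,y)$ is the limit as $\alpha\to1$ of $\kappa_\alpha(x,y)/(1-\alpha)$, where $\kappa_\alpha(x,y)=1-W(\mu_x^\alpha,\mu_y^\alpha)$ and $\mu_x^\alpha$ is the lazy measure. I will use this form and Kantorovich duality: for any $1$-Lipschitz $g$ and any edge $xy$, $|P_\alpha g(x)-P_\alpha g(y)|\le W(\mu_x^\alpha,\mu_y^\alpha)=1-\kappa_\alpha(x,y)$, where $P_\alpha=\alpha I+(1-\alpha)P$. Since the graph distance is a path metric, a bound on edges propagates along geodesics. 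Consequently $\mathrm{Lip}(P_\alpha g)\le (1-\min_{xy\in E}\kappa_\alpha(x,y))\,\mathrm{Lip}(g)$ for every $g$.

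Now apply this to the eigenfunction. We have $P_\alpha f=(1-(1-\alpha)\lambda_1)f$, and $\mathrm{Lip}(f)>0$ because $f$ is non-constant. This gives $|1-(1-\alpha)\lambda_1|\le 1-\min_{E}\kappa_\alpha$, hence $(1-\alpha)\lambda_1\ge \min_E\kappa_\alpha$ for $\alpha$ close to $1$. Dividing by $1-\alpha$ and letting $\alpha\to1$ yields $\lambda_1\ge\kappa$.

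The main obstacle is the passage to the limit. The minimum of $\kappa_\alpha/(1-\alpha)$ over edges must converge to $\min_E\kappa$. This is fine because the edge set is finite and each $\kappa_\alpha(x,y)/(1-\alpha)\to\kappa(x,y)$. Alternatively, one can use the known fact that $\kappa_\alpha$ is concave in $\alpha$ with $\kappa_1=0$, so $\kappa_\alpha/(1-\alpha)$ is monotone and bounded by its limit. If $\kappa$ is instead defined directly via Münch--Wojciechowski's formula, $\kappa(x,y)=\inf_{f}\nabla_{yx}\Delta f$ over $1$-Lipschitz $f$ with $\nabla_{yx}f=1$, the argument simplifies. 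In that case I would instead choose the edge $xy$ maximizing $(f(y)-f(x))/d(x,y)$ after rescaling $f$ to be $1$-Lipschitz. This edge achieves $\nabla_{yx}f=1$, and $\Delta f=-\lambda_1 f$ directly gives $\lambda_1\ge\kappa(x,y)\ge\kappa$.
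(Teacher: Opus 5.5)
Your argument is correct. The paper gives no proof of its own here and simply cites Ollivier and Lin--Lu--Yau, and your proposal is essentially the standard proof from those sources: the lazy averaging operator $P_\alpha$ contracts Lipschitz constants by the factor $1-\min_E\kappa_\alpha$, you apply this to a first eigenfunction, and then let $\alpha\to1$. Your M\"unch--Wojciechowski variant is also valid, since the Lipschitz constant of $f$ is attained on an edge. As a minor simplification, $(1-\alpha)\lambda_1\ge\min_E\kappa_\alpha$ holds for every $\alpha\in[0,1)$ because $|t|\ge t$, so you need neither the restriction to $\alpha$ near $1$ nor the preliminary case $\kappa\le 0$.
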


A graph is called {\it Lichnerowicz sharp} if equality is attained in the eigenvalue estimate of Theorem \ref{theorem:discreteBM}.
%(i.e., inequality \eqref{Lsharp} holds with equality).
A classical rigidity theorem of Obata \cite{Ob62} states that the only Lichnerowicz sharp Riemannian manifold is the round sphere.
A discrete counterpart of Obata's theorem on graphs has been established for Bakry–\'Emery curvature  \cite{LMP24}. However, the classification of Lichnerowicz sharp graphs for Lin--Lu--Yau curvature is still widely open.
The study of this problem for a highly symmetric class of graphs, called distance-regular graphs, was initiated in \cite{CKKLMP20}.

\begin{definition}[Distance-regular graph \cite{BCN89}]
  A connected graph $G$ with diameter $D$ is called \emph{distance-regular} if there are constants 
  $c_i, a_i, b_i$ such that for all $0\le i\le D$, 
  and all vertices $x$ and $y$ at distance $i$, among the neighbors of $y$, there are $c_i$ vertices at distance $i-1$ from $x$, $a_i$ vertices at distance $i$ from $x$, and $b_i$ vertices at distance $i+1$ from $x$.   
\end{definition}

It follows that a distance-regular graph $G$ is a regular graph with degree $d = b_0$, and that $c_i + a_i + b_i = d$ for all $i = 0, 1, \ldots, D$. 
The sequence $\{b_0, b_1, \ldots, b_{D-1}; c_1, c_2, \ldots, c_D\}$ is called the \emph{intersection array} of $G$.
In Figures \ref{fig:icosahedron} and \ref{fig:linepeter}, we present two examples of distance-regular graphs: the icosahedron, with intersection array $\{5,2,1; 1,2,5\}$, and the line graph of the Petersen graph, with intersection array $\{4,2,1; 1,1,4\}$.

\begin{figure}[H]
  \noindent\hspace*{-0.1cm}
  \begin{minipage}[t]{0.40\textwidth}
    \centering
    \includegraphics[width=0.55\linewidth]{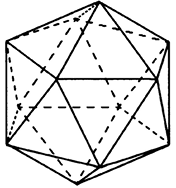}
    \caption{Icosahedron}
    \label{fig:icosahedron}
  \end{minipage}
  \hspace{0.05\textwidth}
  \begin{minipage}[t]{0.55\textwidth}
    \centering
    \includegraphics[width=0.46\linewidth]{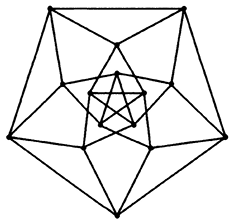}
    \caption{\mbox{Line graph of the Petersen graph}} 
    \label{fig:linepeter}
  \end{minipage}
\end{figure}

In \cite{CKKLMP20}, the authors classify the Lichnerowicz sharp distance-regular graphs under an additional assumption that $\theta_1=b_1-1$, where $\theta_1$ is the second largest adjacency eigenvalue.
In an arXiv preprint \cite[Theorems 3.6 and 4.2]{M}, M\"{u}nch shows that every Lichnerowicz sharp distance-regular graph is effective Bonnet--Myers sharp, and classifies all effective Bonnet--Myers sharp graphs.
In this note, we present a new proof of the classification of Lichnerowicz sharp distance-regular graphs.

\begin{theorem}\label{thm3}
   The Lichnerowicz sharp distance-regular graphs are precisely the following ones: the cocktail party graphs $CP(n)$ for $n\ge 2$, the Hamming graphs $H(d,n)$ for $d\ge 1$ and $n\ge 2$, the Johnson graphs $J(n,k)$ for $1\le k\le n-1$, the demi-cubes $Q^n_{(2)}$ for $n\ge 2$, the Schl\"afli graph, and the Gosset graph. 
\end{theorem}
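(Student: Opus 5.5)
We begin by reducing Lichnerowicz sharpness to an arithmetic condition on the intersection array. For a distance-regular graph every edge has the same Lin--Lu--Yau curvature $\kappa$, and $\lambda_1 = 1-\theta_1/d$, where $\theta_1$ is the second largest adjacency eigenvalue. Sharpness therefore reads $\theta_1 = d(1-\kappa)$.

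\textbf{Curvature and spectral bounds.} First we recall the standard curvature bound for an edge $xy$ in terms of $a_1$ and $b_1$. A transport plan matches common neighbours to themselves, matches $x$ with $y$, and sends the remaining $b_1$ neighbours of $x$ to those of $y$ at cost at most $3$ (cost $1$ where a perfect matching in the bipartite graph exists). This gives
\[
\kappa \le \frac{2+a_1}{d},
\]
with equality exactly when the $b_1$ private neighbours of $x$ and of $y$ can be perfectly matched by edges. Combined with sharpness, this yields $\theta_1 \ge b_1-1$.

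\textbf{Forcing $\theta_1 = b_1-1$.} Next we want the matching upper bound $\theta_1 \le b_1-1$ whenever $\kappa>0$. For this we use the classical Terwilliger-type and Brouwer--Cohen--Neumaier eigenvalue bound $\theta_1\le b_1-1$ (or $\theta_1 \ge b_1-1$ leading to a characterisation). In fact, the known result says that any distance-regular graph with $\theta_1 \ge b_1 - 1$ either has $\theta_1 = b_1-1$ or belongs to a short list. Once $\theta_1=b_1-1$ and $\kappa=(2+a_1)/d$ are established, we can appeal to the classification in \cite{CKKLMP20} under exactly this extra spectral hypothesis, which produces the list in Theorem~\ref{thm3}.

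\textbf{Case split on $c_2$.} Where the spectral bound is not directly available, we split according to whether $G$ contains an induced quadrangle.

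\emph{Quadrangles present.} Here we use the positive-curvature structure: optimal matchings require many $4$-cycles through each edge. We aim to show the graph is a Terwilliger-free configuration with $c_2\ge 2$ and $\mu$-graphs of constant structure. Standard characterisations, namely Egawa for Hamming graphs, the Johnson characterisation, and the halved cube characterisation via $c_2$ and $a_1$, then pin it down.

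\emph{No induced quadrangle.} Then $G$ is a Terwilliger graph, and it is amply regular with positive Lin--Lu--Yau curvature. Here we invoke the announced classification of amply regular Terwilliger graphs with positive curvature. This should leave only small cases such as the icosahedron, for which we check directly that $\lambda_1>\kappa$, excluding them.

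\textbf{Converse and main obstacle.} Finally, we verify the converse by computing $\kappa$ and $\theta_1$ for each listed family. The cocktail party, Hamming, Johnson, halved cube, Schläfli and Gosset graphs all satisfy $\theta_1=b_1-1$, and each admits a perfect matching of private neighbourhoods, so $\kappa=(2+a_1)/d$ and equality holds.

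The main obstacle is the Terwilliger case. Showing that positive curvature forces a tiny list requires bounding the local graph, which is itself a Terwilliger-type graph, via transport cost estimates where cost-$2$ moves are scarce. I would first try to show that $\kappa>0$ forces $b_1$ to be small relative to $a_1$, which forces the diameter to be at most $3$. The amply regular parameters should then become finitely constrained.
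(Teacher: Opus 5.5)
Your skeleton matches the paper's. Lemma~\ref{up} turns sharpness into $\theta_1\ge b_1-1$, a reverse spectral inequality should force $\theta_1=b_1-1$, the Terwilliger classification absorbs the exceptions, and the classification under $\theta_1=b_1-1$ finishes. The gap is that the load-bearing step, $\theta_1\le b_1-1$, is never established, and the version you state (``whenever $\kappa>0$'') is false. The icosahedron and the line graph of the Petersen graph both have positive curvature but $\theta_1>b_1-1$; for the latter, $\kappa=1/4$ and $\theta_1=2>1=b_1-1$.

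The correct tool is Theorem~\ref{thm:4.4.3}: if $D\ge 3$ and $\theta_1>b_1-1$, then $c_2=1$ or $G$ is the icosahedron. So the exceptional class is every graph with $c_2=1$, not a ``short list''. Moreover, diameter $2$ is not covered at all, and you need a separate argument there, such as Lemma~\ref{thm:srg_eigenvalue}. If $\lambda_1\le (2+\alpha)/d$, Theorem~\ref{lam} gives $4(d-\alpha-1)(d-2\alpha+\beta-4)\le 0$, while Theorem~\ref{lem:d-lowbdd}(ii) gives $d\ge 2\alpha-\beta+4$ for every strongly regular graph other than the pentagon. Together these force $\lambda_1\ge (2+\alpha)/d$.

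With these two facts your quadrangle split closes at once. An induced quadrangle forces $c_2\ge 2$ and excludes the pentagon and the icosahedron, so sharpness gives $\theta_1=b_1-1$ and the known classification under that hypothesis applies. Your ``quadrangles present'' paragraph should be replaced by exactly this. As written it derives no intersection array, and the characterisations you name would not close it anyway: the Doob graphs share the Hamming arrays, and the Chang graphs share the parameters of $T(8)$.

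In the Terwilliger branch, Theorem~\ref{ARTG} leaves the pentagon, the icosahedron, and the line graphs of the Petersen graph, the Hoffman--Singleton graph and a hypothetical strongly regular graph with parameters $(3250,57,0,1)$. That last one cannot be ``checked directly'', so you need a parametric exclusion. Let $H$ have parameters $(n',d',0,1)$. Lemma~\ref{ARTG_up} gives $\kappa\le 1/(2d'-2)$ on every edge of its line graph. Lemma~\ref{spec} together with Theorem~\ref{lam} gives $\lambda_1=(2d'+1-\sqrt{4d'-3})/(4d'-4)$, which strictly exceeds $1/(2d'-2)$ because $(d'-1)^2>0$.

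Two smaller corrections. First, distance-regular graphs need not have constant edge curvature (the Doob graphs have two curvature values), so sharpness must be phrased as $\lambda_1=\min_{xy\in E}\kappa(x,y)$. Second, your closing heuristic that positive curvature should force small diameter is unproven, false without the Terwilliger hypothesis (Hamming graphs), and unnecessary once Theorem~\ref{ARTG} is invoked.
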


We point out that our proof of Theorem \ref{thm3} is completely different from that of M\"{u}nch, and highlights a further connection between distance-regular graphs and effective Bonnet--Myers sharp graphs.
We also remark that the Cartesian products of Lichnerowicz sharp distance-regular graphs coincide with
the complete connected hypermetric spaces classified by Terwilliger and Deza \cite{T}.
A key step of our proof of Theorem \ref{thm3} is the classification of all amply regular Terwilliger graphs with positive Lin--Lu--Yau curvature (Theorem \ref{ARTG}).

\begin{definition}[Amply regular graph \cite{BCN89}]
    A non-complete connected $d$-regular graph on $n$ vertices is called {\it amply regular} with parameters $(n,d,\alpha,\beta)$ if the following two conditions hold:
    \begin{itemize}
        \item[(i)] any two adjacent vertices have exactly $\alpha$ common neighbors;
        \item[(ii)] any two vertices at distance $2$ have exactly $\beta$ common neighbors.
    \end{itemize}
\end{definition}

Compared to the definition of amply regular graphs given in \cite{BCN89}, we additionally assume that the graph is non-complete and connected for the convenience of discussion.
%For convenience, all amply regular graphs are assumed to be connected and non-complete throughout this note.
For such graphs, the Lin--Lu--Yau curvature has proven to be a powerful tool for bounding their diameters and eigenvalues \cite{CHLZ24,CL,HLX24}.

We note that any non-complete distance-regular graph is amply regular. In fact, a distance-regular graph on $n$ vertices with intersection array
$\{b_0, b_1, \ldots, b_{D-1}; c_1, c_2, \ldots, c_D\}$ is an amply regular graph with parameters $(n,d,\alpha, \beta)=(n,b_0,b_0-b_1-1,c_2)$.
Consequently, when discussing a non-complete distance-regular graph, we often identify $d=b_0$, $\alpha =b_0-b_1-1$, and $\beta= c_2$.

A non-complete graph $G$ is called a {\it Terwilliger} graph if, for any two vertices $x$ and $y$ at distance 2, the subgraph of $G$ induced by the common neighbors of $x$ and $y$ is a clique of size $\beta$ (for some fixed $\beta \geq 1$).
Note that an amply regular graph is Terwilliger if and only if it contains no induced quadrangles.

\begin{theorem}\label{ARTG}
    Let $G$ be an amply regular Terwilliger graph with positive Lin--Lu--Yau curvature. Then, $G$ 
    is isomorphic to one of the following graphs: the pentagon, the icosahedron, the line graph of the Petersen graph, the line graph of the Hoffman–Singleton graph, or the line graph of a strongly regular graph with parameters $(3250,57,0,1)$.
\end{theorem}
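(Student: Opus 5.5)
We will bound the curvature of an edge $xy$ in an amply regular Terwilliger graph from above by constructing a good Lipschitz function. Via Kantorovich duality, $\kappa(x,y)\le \sum_z f(z)(m_x(z)-m_y(z))$ for any $1$-Lipschitz $f$. Here $m_x$ is the uniform measure on the neighbourhood of $x$ (with the idleness convention of Definition~\ref{def:LLY}). The relevant vertex sets are: the $\alpha$ common neighbours $C=N(x)\cap N(y)$, the $b=d-\alpha-1$ vertices of $N(x)\setminus (C\cup\{y\})$, and the $b$ vertices of $N(y)\setminus(C\cup\{x\})$. The curvature is governed by how well $N(x)\setminus N[y]$ can be matched to $N(y)\setminus N[x]$ at distance $1$.

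The Terwilliger property gives the key structural input. Take $u\in N(x)\setminus N[y]$ with $d(u,y)=2$. The common neighbours of $u$ and $y$ form a clique of size $\beta$ containing $x$, so $u$ has at most $\beta-1$ neighbours in $N(y)\setminus N[x]$. If every such $u$ had no neighbour there, the only transport paths would have length $2$ or $3$, forcing $\kappa\le 0$. Hence positivity of $\kappa$ forces a near-perfect matching at distance $1$ between the two sides. Counting gives roughly
\[
\kappa(x,y)\le \frac{1}{d+1}\Big(2+\alpha - b + \#\{\text{matched pairs}\}\Big),
\]
up to the exact constants from the optimal transport formula.

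Combining this with the clique bound, positivity forces $\beta\ge 2$, or else $\beta=1$ together with strong restrictions. In each case we get linear inequalities in $(d,\alpha,\beta)$. The local graph of an amply regular Terwilliger graph is itself a $(\alpha)$-regular graph with $\mu$-value related to $\beta-1$. We then split into cases.

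\textbf{Case $\beta=1$.} Positive curvature forces $\alpha\ge b-1$ roughly. If $\alpha=0$, the graph has girth $\ge5$ and curvature $>0$ forces $d=2$, giving the pentagon. If $\alpha\ge1$ with $\beta=1$, the neighbourhoods are disjoint unions of cliques of size $\alpha$, and positivity forces exactly two cliques, i.e.\ $d=2\alpha+2$ or similar. Then the graph is the line graph of a graph of girth $\ge5$. Positivity then forces that underlying graph to be a Moore graph of diameter $2$, giving the Petersen and Hoffman–Singleton graphs and the hypothetical $(3250,57,0,1)$ graph. The pentagon case $K_2$ degenerates.

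\textbf{Case $\beta\ge2$.} The local graph is amply regular Terwilliger again with $\beta'=\beta-1$, and inherits a positivity-type inequality. We use the known bound that Terwilliger graphs with $\beta\ge2$ have large degree relative to $\alpha$, as in \cite{BCN89}, Section~1.16. Together with the curvature inequality $\alpha\gtrsim d/2$, this forces the local graph to be a pentagon, hence the icosahedron.

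The main obstacle is getting a sharp enough matching/transport estimate to handle $\beta\ge2$. For that, we first try to prove that each $u$ has exactly one neighbour on the other side. Then we exploit the induced-quadrangle-freeness to show the local graph is very small.
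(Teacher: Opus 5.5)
There is a genuine gap, and it is in the step the whole argument rests on. Take an edge $xy$ and a vertex $u\in N(x)\setminus N[y]$. The Terwilliger condition says the common neighbours of $u$ and $y$ form a clique containing $x$. So a neighbour $w$ of $u$ in $N(y)\setminus N[x]$ would be a common neighbour of $u$ and $y$, hence adjacent to $x$, which is absurd (equivalently, $xywu$ would be an induced quadrangle). Thus $u$ has \emph{no} neighbour on the other side, not ``at most $\beta-1$''.

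Your next two assertions then both fail. First, the absence of such edges does not force $\kappa\le 0$: the icosahedron is Terwilliger, has no edges between the two sides, and has $\kappa=2/5$. Second, positivity cannot force a near-perfect matching, because no such matching exists in any Terwilliger graph. Read literally, your claims would show that no amply regular Terwilliger graph has positive curvature. Already for $\beta=1$ your own bound $\beta-1=0$ gives this, contradicting the pentagon and the line graph of the Petersen graph in the statement.

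The positive contribution actually comes from the $\alpha$ common neighbours, whose mass stays put. The correct estimate is your displayed formula with zero matched pairs and denominator $d$. A simple optimal plan (Lemma~\ref{simple}) moves $p-\frac{1-p}{d}$ from $x$ to $y$ at cost $1$, and moves every unit of mass on $N(x)\setminus N[y]$ a distance at least $2$. This gives $\kappa(x,y)\le\frac{2\alpha+3-d}{d}$, hence $d\le 2\alpha+2$ for \emph{every} $\beta$. In your framework this inequality is never justified for $\beta\ge 2$, since allowing matched pairs only weakens the bound. Your proposed remedy, proving that each $u$ has exactly one neighbour on the other side, targets a statement that is false in every Terwilliger graph. (Also, duality gives $W_1\ge\sum_z f(z)(m_x(z)-m_y(z))$, so a $1$-Lipschitz $f$ bounds $1-W_1$ from above; your displayed inequality is not of the right form.)

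Even with $d\le 2\alpha+2$ in hand, both cases are only sketched.

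\textbf{Case $\beta\ge 2$.} By Theorem~\ref{thm:1.16.3}, it is the \emph{reduced} local graph, with $\bar\beta=(\beta-1)/s$, that is strongly regular Terwilliger. The inequality $d\le 2\alpha+2$ becomes $\bar n=d/s\le 2(\bar d+1)$. Combining this with $\bar n-\bar d-1=\bar d(\bar d-\bar\alpha-1)/\bar\beta$ and $\bar d\ge 2\bar\alpha-\bar\beta+3$ gives $\bar\alpha\le 2\bar\beta-1$, then $\bar d\le 3\bar\beta+1$, then $\bar n\le 6\bar\beta+4<50\bar\beta$. Lemma~\ref{cor:1.16.6} then leaves only the pentagon or the Petersen graph. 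The Petersen graph violates $\bar n\le 2(\bar d+1)$. The pentagon gives $\bar\alpha=0$, so $s\le\bar\alpha+1=1$ and $G$ is locally a pentagon, i.e.\ the icosahedron. The relevant bound in Section~1.16 of \cite{BCN89} is of the form $n<50\mu$, not ``large degree relative to $\alpha$''.

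\textbf{Case $\beta=1$.} The local graphs are disjoint unions of cliques of size $\alpha+1$, not $\alpha$. The step ``positivity forces a Moore graph'' needs a curvature argument. If $H$ has diameter at least $3$, take a geodesic $u_1u_2u_3u_4$ and the edge $x=u_1u_2$, $y=u_2u_3$ of $L(H)$. The vertex $z=u_3u_4$ is at distance at least $3$ from every vertex of $N(x)\setminus N[y]$, by girth $\ge 5$ and $d(u_1,u_4)=3$. This raises $W_1$ to at least $1$ and gives $\kappa(x,y)\le 0$. The same argument is what disposes of the cycles $C_n$, $n\ge 6$, which your $\alpha=0$ subcase overlooks: they are amply regular Terwilliger with $d=2$, $\beta=1$, and have curvature $0$.
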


Classifying graphs with positive Lin–Lu–Yau curvature under various natural constraints is a fruitful research direction. Examples include classifications for graphs that are $C_4$-free \cite{LY,LY24}, outerplanar \cite{BOSWY24,LLW}, and Halin graphs \cite{CLLY}. %Perhaps one of the most interesting open problems in this direction is the classification of planar graphs with positive Lin–Lu–Yau curvature (and minimum degree at least 3).
Amply regular graphs with parameters $(n,d,\alpha,\beta)$ were proved to have positive Lin--Lu--Yau curvature whenever $\alpha=\beta >1$ \cite{LL21} or $1\ne \beta> \alpha$ \cite{HLX24}. 
A direct consequence of Theorem \ref{ARTG} is the classification of all amply regular graphs with positive Lin--Lu--Yau curvature and $\beta =1$ (Corollary \ref{mu1}).

The rest of the paper is organized as follows. In Section \ref{section2}, we recall the necessary background on the Lin--Lu--Yau curvature, amply regular graphs, strongly regular graphs, distance-regular graphs, and Terwilliger graphs. In Section \ref{section3} and Section \ref{section4}, we prove Theorem \ref{ARTG} and Theorem \ref{thm3}, respectively.

\section{Preliminaries}\label{section2}
\subsection{Ollivier Ricci curvature and Lin--Lu--Yau curvature}\label{sec1}
%The Wasserstein distance $W_1$ is a fundamental concept from optimal transport theory. 
Intuitively, the Wasserstein distance quantifies the minimal total cost of transporting one probability measure to another, where the cost of transporting a unit of mass between two vertices is given by their combinatorial distance. 
%This makes $W_1$ the natural metric for comparing the neighborhood measures in the definition of Lin--Lu--Yau curvature.

\begin{definition}[Wasserstein distance]
   
Let $G=(V,E)$ be a locally finite graph, $\mu_1$ and $\mu_2$ be two probability measures on $G$. The Wasserstein distance $W_1(\mu_1, \mu_2)$ between $\mu_1$ and $\mu_2$ is defined as
\begin{align}\label{W}
    W_1(\mu_1,\mu_2)=\inf_{\pi}\sum_{y\in V}\sum_{x\in V}d(x,y)\pi(x,y),
\end{align}
where $d(x,y)$ denotes the combinatorial distance between $x$ and $y$ in $G$, and the infimum is taken over all maps $\pi: V\times V\to [0,1]$ satisfying
$$\mu_1(x)=\sum\limits_{y\in V}\pi(x,y) \text{ for any $x\in V$ and }\mu_2(y)=\sum\limits_{x\in V}\pi(x,y) \text{ for any $y\in V$}.$$
Such a map $\pi$ is called a {\it transport plan}.
A transport plan $\pi$ that attains the infimum in \eqref{W} is called an {\it optimal} transport plan.
We call a transport plan {\it simple} if, for each vertex $x\in V$, we have
$$\pi(x,x)=\min\{ \mu_1(x), \mu_2(x)\}.$$
\end{definition}
We consider the following particular probability measure around a vertex $x\in V$:
     \[\mu_x^p(y)=\left\{
                    \begin{array}{ll}
                      p, & \hbox{if $y=x$;} \\
                      \frac{1-p}{\mathrm{deg}(x)}, & \hbox{if $yx\in E$;} \\
                      0, & \hbox{otherwise,}
                    \end{array}
                  \right.
     \]
     where the vertex degree $\mathrm{deg}(x)$ is the number of edges incident with $x$.

\begin{definition}[$p$-Ollivier curvature \cite{Ollivier09} and Lin--Lu--Yau curvature \cite{LLY11}]\label{def:LLY} Let $G=(V,E)$ be a locally finite graph. For any vertices $x,y\in V$, the $p$-Ollivier curvature $\kappa_p(x,y)$, $p\in [0,1]$, is defined as
       \[\kappa_p(x,y)=1-\frac{W_1(\mu_x^p,\mu_y^p)}{d(x,y)}.\]
The Lin--Lu--Yau curvature $\kappa(x,y)$ is defined as
       \[\kappa(x,y)=\lim_{p\to 1}\frac{\kappa_p(x,y)}{1-p}.\]
       \end{definition}
Notice that $\kappa_1(x,y)$ is always $0$. Hence, the Lin--Lu--Yau curvature $\kappa(x,y)$ equals  the negative of the left derivative of the function $p\mapsto \kappa_p(x,y)$ at $p=1$.

Our later proof is closely related to the following upper bound on the Lin--Lu--Yau curvature of a regular graph.
\begin{lemma}[{\cite[Proposition 2.7]{CKKLMP20}}]
\label{up}
    Let $G=(V,E)$ be a $d$-regular graph. For any edge $xy\in E$, we have
\begin{align}
    \kappa(x,y)\leq \frac{2+|N_{xy}|}{d},
\end{align}
where $N_{xy}:=\{z\in V\ |\ xz\in E, yz\in E\}$.
\end{lemma}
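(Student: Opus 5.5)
The statement to prove is Lemma \ref{up}: for a $d$-regular graph and an edge $xy$, $\kappa(x,y)\le \frac{2+|N_{xy}|}{d}$. The plan is to bound $W_1(\mu_x^p,\mu_y^p)$ from below for $p$ close to $1$ by a $1$-Lipschitz test function, via Kantorovich duality:
\[
W_1(\mu_1,\mu_2)=\sup_{f\ 1\text{-Lip}}\sum_v f(v)\big(\mu_1(v)-\mu_2(v)\big).
\]
Since $d(x,y)=1$ and $\kappa(x,y)=\lim_{p\to1}\frac{1-W_1(\mu_x^p,\mu_y^p)}{1-p}$, an estimate $W_1\ge 1-(1-p)\frac{2+|N_{xy}|}{d}$ for $p$ near $1$ gives the claim.

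\textbf{Test function.} I would take $f(x)=1$ and $f(y)=0$. On $N(x)\setminus(N_{xy}\cup\{y\})$ put $f=2$. On $N_{xy}$ put $f=1$. On $N(y)\setminus(N_{xy}\cup\{x\})$ put $f=-1$. Elsewhere the values only need to extend $f$ in a $1$-Lipschitz way; since only the supports of the two measures matter, one can use the McShane extension.

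\textbf{Lipschitz check.} Two private neighbours of $x$ (value $2$) and of $y$ (value $-1$) are not adjacent to each other's centre. Their graph distance may be $2$ or $3$, while the value difference is $3$. To avoid this problem, use a cheaper choice: $f=1$ on $N(x)\setminus N_{xy}$ (excluding $y$), $f=0$ on $N_{xy}$, and $f=-1$ on $N(y)\setminus N_{xy}$ (excluding $x$), with $f(x)=0$ and $f(y)=-1$; shift constants as convenient. Every pair then differs by at most $2$. Vertices with value difference $2$ are a private neighbour $u$ of $x$ and a private neighbour $w$ of $y$, which are at distance at least $2$ unless they are adjacent. The differences of $1$ occur only between vertices that are at distance at most $2$, and in fact the relevant pairs are adjacent or have a consistent ordering. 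The main obstacle is adjacent pairs $u\sim w$ with $u\in N(x)\setminus N_{xy}$ and $w\in N(y)\setminus N_{xy}$. Here, rather than a single function, I would take the primal side.

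\textbf{Primal argument instead.} Any transport plan must move the mass $\frac{1-p}{d}$ sitting on each private neighbour of $x$ (there are $d-1-|N_{xy}|$ of them) to the support of $\mu_y^p$. Likewise, the excess mass $p-\frac{1-p}{d}$ at $x$ must be moved. Writing the cost as a sum, mass moved between distinct points costs at least $1$. The total mass that can stay in place is $\sum_v\min(\mu_x(v),\mu_y(v))=2\frac{1-p}{d}+|N_{xy}|\frac{1-p}{d}$, coming from $x$, $y$ and the common neighbours. Hence
\[
W_1\ge 1-\frac{(2+|N_{xy}|)(1-p)}{d}.
\]
This is exactly the required bound, so I would actually present this primal argument: the cost is at least the total mass minus the overlap, because every unit not kept fixed travels distance at least $1$. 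Then $\kappa_p(x,y)\le (1-p)\frac{2+|N_{xy}|}{d}$, and dividing by $1-p$ and letting $p\to1$ gives the claim. The only care needed is the overlap computation at $x$ and $y$ when $p>1/(d+1)$, where $\min(\mu_x(x),\mu_y(x))=\frac{1-p}{d}$.
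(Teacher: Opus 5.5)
Your final primal argument is correct and complete. For any transport plan $\pi$, the marginal constraints give $\pi(v,v)\le\min\{\mu_x^p(v),\mu_y^p(v)\}$, and every off-diagonal pair costs at least $1$. Hence $W_1(\mu_x^p,\mu_y^p)\ge 1-\sum_v\min\{\mu_x^p(v),\mu_y^p(v)\}=1-\frac{(2+|N_{xy}|)(1-p)}{d}$ for $p\ge\frac{1}{d+1}$. Since $d(x,y)=1$, this says $\kappa_p(x,y)\le\frac{(2+|N_{xy}|)(1-p)}{d}$, and dividing by $1-p$ and letting $p\to1$ gives the lemma.

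The paper does not prove this statement; it only quotes it from the literature. The closest argument in the paper is the proof of Lemma \ref{ARTG_up}. There the authors first fix a simple optimal plan via Lemma \ref{simple}, then use the Terwilliger hypothesis to show that mass leaving $A_x$ (the neighbours of $x$, other than $y$, not adjacent to $y$) must travel distance at least $2$. Here only distance at least $1$ is needed. Your observation that $W_1$ dominates the total variation distance therefore bypasses Lemma \ref{simple} and bounds every plan at once, which is the most elementary route.

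In a write-up, delete the first two paragraphs.
\begin{enumerate}[(1)]
\item Your first test function (value $2$ on $A_x$, $1$ on $N_{xy}$, $-1$ on $A_y$) is not $1$-Lipschitz whenever a common neighbour is adjacent to a vertex of $A_y$, or a vertex of $A_x$ lies within distance $2$ of a vertex of $A_y$.
\item Your second function is $1$-Lipschitz on the support exactly when no vertex of $A_x$ is adjacent to a vertex of $A_y$. In that case it certifies $\kappa(x,y)\le\frac{2|N_{xy}|+3-d}{d}$, which is precisely the stronger bound of Lemma \ref{ARTG_up}. That bound genuinely needs the no-quadrangle hypothesis, which is why you could not make this function work in general.
\item If you want a dual proof of the present lemma, use the indicator function of $\{v:\mu_x^p(v)>\mu_y^p(v)\}=\{x\}\cup A_x$ for $p>\frac{1}{d+1}$. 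It is automatically $1$-Lipschitz, because distinct vertices are at distance at least $1$, and it gives exactly the same bound as your primal argument.
\end{enumerate}
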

   
The following lemma is quite useful in calculating the Wasserstein distance and the Lin--Lu--Yau curvature.

\begin{lemma}[{\cite[Lemma 4.1]{CLY}\label{simple}}]
    Let $G$ be a connected graph. Let $x$ and $y$ be two adjacent vertices in $G$ with $d_x\ge d_y$. For any $p\in \left[ \frac{1}{1+d_y},1 \right]$, there is a simple optimal transport plan from $\mu_x^p$ to $\mu_y^p$ such that $$\pi(x,y)=p-\frac{1-p}{d_y}.$$ 
\end{lemma}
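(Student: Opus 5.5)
This lemma (Lemma \ref{simple}) is quoted from \cite{CLY}; here is how I would prove it directly. The plan is to start from an arbitrary optimal transport plan and modify it, without increasing the cost, until it is simple and moves the prescribed mass from $x$ to $y$. Write $\mu_1=\mu_x^p$ and $\mu_2=\mu_y^p$.

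\textbf{Bookkeeping.} On $x$ we have $\mu_1(x)=p$ and $\mu_2(x)=\frac{1-p}{d_y}$. On $y$ we have $\mu_1(y)=\frac{1-p}{d_x}$ and $\mu_2(y)=p$. The hypothesis $p\ge \frac{1}{1+d_y}$ is exactly $p\ge \frac{1-p}{d_y}$, and it also gives $p\ge\frac{1-p}{d_x}$ because $d_x\ge d_y$. So a simple plan must have $\pi(x,x)=\frac{1-p}{d_y}$ and $\pi(y,y)=\frac{1-p}{d_x}$. The excess mass at $x$ is therefore $p-\frac{1-p}{d_y}$, and the deficit at $y$ is $p-\frac{1-p}{d_x}$, which is at least as large. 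On common neighbours $z\in N_{xy}$ both measures are comparable: $\frac{1-p}{d_x}\le\frac{1-p}{d_y}$. The target plan keeps the diagonal at its minimum everywhere and sends the entire excess at $x$ directly to $y$, at cost $1$ per unit.

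\textbf{Step 1: make the plan simple.} I would use the standard uncrossing argument. Suppose $\pi(u,u)<\min\{\mu_1(u),\mu_2(u)\}$ at some vertex $u$. Then some mass leaves $u$, say $\pi(u,v)>0$ with $v\ne u$, and some mass enters $u$, say $\pi(w,u)>0$ with $w\ne u$. Replace $\pi(w,u),\pi(u,v)$ by $\pi(u,u),\pi(w,v)$, shifting the same small amount. By the triangle inequality $d(w,v)\le d(w,u)+d(u,v)$, so the cost does not increase, and the marginals are preserved. Choose the shifted amount to be the largest allowed. Each such move strictly increases the total diagonal mass $\sum_u\pi(u,u)$, which is bounded above. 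Hence, working in the compact set of optimal plans, or arguing on the finite support, this yields an optimal simple plan.

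\textbf{Step 2: force $\pi(x,y)$.} In a simple plan, $x$ sends out exactly the excess $p-\frac{1-p}{d_y}$ to vertices other than $x$, and $y$ receives $p-\frac{1-p}{d_x}$ from vertices other than $y$. Suppose $\pi(x,v)>0$ for some $v\ne y$. Since $y$ still has a deficit to fill, there is a vertex $w\ne x,y$ with $\pi(w,y)>0$. Swap to $\pi(x,y)$ and $\pi(w,v)$. The change in cost is
\[
d(x,y)+d(w,v)-d(x,v)-d(w,y).
\]
The supports are local: $v\in N(y)\cup\{y\}$, $w\in N(x)$, and $d(x,y)=1$. Then $d(w,v)\le d(w,x)+d(x,y)+d(y,v)\le 3$, while $d(x,v)\ge1$ and $d(w,y)\ge1$. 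This crude bound only shows the change is at most $1$, which is not good enough.

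\textbf{Step 3: the hardest part.} I would instead bound $d(w,v)\le d(w,y)+d(y,v)$ and use $d(y,v)\le d(x,v)$. The latter holds because $v$ is a neighbour of $y$ receiving mass, so $d(y,v)=1\le d(x,v)$ once $v\ne x$. This gives
\[
1+d(w,v)\le 1+d(w,y)+1\le d(x,v)+d(w,y)+1,
\]
which is still off by one. The fix is a case split on $d(x,v)$. If $d(x,v)=2$, the change is at most $1+(d(w,y)+1)-2-d(w,y)=0$. If $d(x,v)=1$, then $v\in N_{xy}$. In that case, simplicity and the comparison $\mu_1(v)\le\mu_2(v)$ mean $v$ is a receiver, and I redirect the mass differently: use $w\to v$, noting $d(w,v)\le2=d(w,y)+d(x,v)$. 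Each swap preserves simplicity because it does not touch the diagonal, and it strictly increases $\pi(x,y)$. Continuing until $x$ sends nothing elsewhere gives $\pi(x,y)=p-\frac{1-p}{d_y}$ in an optimal simple plan, which is the claim. The careful case check in this step is where I expect the main difficulty.
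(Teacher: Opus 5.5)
The paper gives no proof of this lemma (it is imported from \cite{CLY}), so your argument has to stand on its own. The overall strategy is sound: first uncross to a simple optimal plan, then swap mass onto $(x,y)$. Step 1 is fine if you phrase it as follows: among optimal plans, which form a compact set, pick one maximizing $\sum_u\pi(u,u)$; your move shows it is simple.

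The gap is in Step 3, in the case $v\in N_{xy}$. There the cost change of the swap is $d(w,v)-d(w,y)$, so you need $d(w,v)\le d(w,y)$. Your justification ``$d(w,v)\le 2=d(w,y)+d(x,v)$'' does not deliver this. With $d(x,v)=1$, the stated equality means $d(w,y)=1$, i.e.\ $w\in N_{xy}$. Then you would need $d(w,v)\le 1$, which fails for two non-adjacent common neighbours, and the cost goes up by $1$. If $w=v$, the swap even increases the diagonal entry $\pi(v,v)$, so your claim that swaps never touch the diagonal is also unjustified. Note that ``redirecting the mass differently'' is just the same swap.

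The missing idea concerns the sender $w$, not the receiver $v$. For every $u\in N_{xy}$ one has $\mu_1(u)=\frac{1-p}{d_x}\le\frac{1-p}{d_y}=\mu_2(u)$, and also $\mu_1(y)=\frac{1-p}{d_x}\le p=\mu_2(y)$. So in a simple plan such $u$, and $y$, keep all their mass on the diagonal. Hence any $w\neq x$ with $\pi(w,y)>0$ lies in $A_x=N(x)\setminus(N(y)\cup\{y\})$. This gives $d(w,y)=2$, and $w\neq v$ because $v\in N(y)$.

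With this, no case split is needed. The bound $d(w,v)\le d(w,x)+d(x,v)=1+d(x,v)$ yields
\[
d(x,y)+d(w,v)-d(x,v)-d(w,y)\le 1+1+d(x,v)-d(x,v)-2=0
\]
for every $v\in N(y)\setminus\{x\}$.

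Finally, ``strictly increases $\pi(x,y)$'' does not by itself give termination. There are two easy fixes:
\begin{enumerate}
\item[(a)] Shift the maximal amount $\min\{\pi(x,v),\pi(w,y)\}$, so that each swap kills an off-diagonal entry of row $x$ or of column $y$ without creating new ones there.
\item[(b)] Take an optimal simple plan maximizing $\pi(x,y)$, and derive a contradiction from a single swap.
\end{enumerate}
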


\subsection{Amply regular graphs and strongly regular graphs}
\iffalse
\begin{definition}[Amply regular graph \cite{BCN89}] Let $G$ be a $d$-regular graph with $n$ vertices which is neither complete nor empty. Then  $G$ is called an amply regular graph with parameters $(n,d,\alpha,\beta)$ if the
following two conditions hold:
        \begin{itemize}
           \item [(i)] Any two adjacent vertices have exactly $\alpha$ common neighbors;
           \item [(ii)] Any two vertices at distance $2$ have exactly $\beta$ common neighbors.
        \end{itemize}     
\end{definition}
\fi

The following result gives a basic inequality for the parameters of an amply regular graph.

\begin{theorem}[{\cite[Theorem 1.2.3]{BCN89}}]\label{lem:d-lowbdd}
    Let $G$ be an amply regular graph with parameters $(n, d, \alpha, \beta)$.
\begin{enumerate}
    \item[(i)] We have
    \begin{equation}\label{eq:7}
        d \geq 2\alpha + 3 - \beta.
    \end{equation}
    %and, with $b_2 = \min\{b_2(\alpha, \beta) \mid d(\alpha, \beta) = 2\}$, every $2$-claw is in at most $\max\{0, k - 2\lambda - 3 + \mu - b_2\}$ quadrangles.
    
    \item[(ii)] Equality holds in \eqref{eq:7} if and only if $G$ is the icosahedron or the line graph of a regular graph of girth at least $5$. In particular, if $G$ has diameter 2, then $G$ is the pentagon.
\end{enumerate}
\end{theorem}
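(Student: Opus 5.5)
The plan is a local counting argument around a path of length $2$. It gives the inequality \eqref{eq:7} directly, and its equality case becomes a strong structural condition on the local graphs of $G$.

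\textbf{The inequality.} Since $G$ is connected and non-complete, there are vertices $x,z$ with $d(x,z)=2$; fix a common neighbour $y$. The set $\{x,z\}\cup N_{xy}\cup N_{yz}$ lies inside the neighbourhood of $y$, which has size $d$. Moreover $x\notin N_{yz}$ and $z\notin N_{xy}$, because $x\not\sim z$. Every vertex of $N_{xy}\cap N_{yz}$ is a common neighbour of $x$ and $z$ other than $y$, so $|N_{xy}\cap N_{yz}|\le \beta-1$. By inclusion--exclusion,
\[
d\ \ge\ 2+|N_{xy}|+|N_{yz}|-|N_{xy}\cap N_{yz}|\ \ge\ 2+2\alpha-(\beta-1)=2\alpha+3-\beta ,
\]
which is \eqref{eq:7}.

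\textbf{Reformulating equality.} Equality forces both inequalities to be tight for every induced path $x\sim y\sim z$. First, every common neighbour of $x$ and $z$ is adjacent to $y$; hence $\mu$-graphs are cliques and there are no induced quadrangles. Second, every neighbour of $y$ other than $x,z$ is adjacent to $x$ or to $z$. Now pass to the local graph $\Delta=\Gamma(y)$, which is $\alpha$-regular on $d$ vertices. The second condition says that the complement $\overline{\Delta}$ is triangle-free. The count also shows that any two non-adjacent vertices of $\Delta$ have exactly $\beta-1$ common neighbours inside $\Delta$. So the task is to classify $\alpha$-regular graphs $\Delta$ with triangle-free complement and this constant non-adjacent co-degree, and then to recover $G$ from its local graphs.

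\textbf{Case analysis.}
\begin{itemize}
\item \emph{$\Delta$ is disconnected.} Triangle-freeness of $\overline{\Delta}$ allows at most two components. Each component is a clique, since a non-edge inside one component together with any vertex of another component gives a triangle in $\overline{\Delta}$. So $\Delta\cong 2K_{\alpha+1}$ and $G$ is locally a disjoint union of two cliques. By the standard local characterisation, $G$ is then the line graph of a graph $H$. Regularity of $G$ and equal clique sizes make $H$ regular. Here $\beta-1=0$, so $\beta=1$. A triangle in $H$ would produce a vertex of $G$ with the wrong local graph, and a $4$-cycle in $H$ would produce an induced quadrangle (equivalently $\beta\ge 2$). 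Hence $H$ has girth at least $5$.
\item \emph{$\Delta$ is connected.} This is where I expect the main obstacle. I would first show that $\Delta$ contains an induced pentagon, using that a connected graph with triangle-free complement and no induced quadrangle (inherited from the Terwilliger property) is tightly constrained. I would then use constant co-degree and $\alpha$-regularity to force $\Delta\cong C_5$, so $\alpha=2$, $d=5$, $\beta=2$. A connected graph that is locally a pentagon is the icosahedron.
\end{itemize}

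\textbf{Converse and diameter $2$.} For the converse, it suffices to check that the icosahedron ($d=5$, $\alpha=2$, $\beta=2$) and line graphs of regular graphs of girth at least $5$ ($d=2\alpha+2$, $\beta=1$) attain equality. For diameter $2$: the icosahedron has diameter $3$. If $L(H)$ has diameter $2$ with $H$ of girth at least $5$, then any two disjoint edges of $H$ must be joined by an edge. This forces $H$ to be small, and the only regular example is $C_5$, whose line graph is the pentagon.
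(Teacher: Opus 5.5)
Your proof of (i) is the standard counting argument; the paper gives no proof of this statement and simply cites \cite{BCN89}. Your reformulation of the equality case and your treatment of a disconnected local graph are also fine.

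The genuine gap is the connected case. No argument using only the local conditions you list can force $\Delta\cong C_5$: regularity, triangle-free complement, no induced quadrangle, and constant co-degree $\beta-1$ are also satisfied by other graphs.

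Let $C_5[K_s]$ be the pentagon with each vertex replaced by an $s$-clique $Q_i$ ($i\in\mathbb{Z}_5$), where $Q_i$ is completely joined to $Q_{i\pm1}$. It has the following properties:
\begin{itemize}
\item It is connected and $(3s-1)$-regular on $5s$ vertices.
\item Its complement is a blow-up of $C_5$ by cocliques, hence triangle-free and without induced $2K_2$. So $C_5[K_s]$ has no induced quadrangle.
\item Two non-adjacent vertices (in $Q_i$ and $Q_{i+2}$) have exactly the $s$ common neighbours $Q_{i+1}$, which form a clique.
\item With $\alpha=3s-1$ and $\beta=s+1$, we get $d=5s=2(3s-1)+3-(s+1)$.
\end{itemize}
In fact, $\overline{\Delta}$ is regular, triangle-free and $2K_2$-free, so it is either $K_{r,r}$ or a balanced blow-up of $C_5$. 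The local analysis therefore only yields $\Delta\cong 2K_{\alpha+1}$ or $\Delta\cong C_5[K_s]$ with $s=\beta-1$, the same $s$ at every vertex.

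What is missing is a \emph{global} argument excluding $s\ge 2$. Here is one way.
\begin{itemize}
\item Fix $y$ with $G(y)=Q_0\cup\dots\cup Q_4$ as above, and distinct $u,v\in Q_0$. Their common neighbours $y$, $Q_0\setminus\{u,v\}$, $Q_1$, $Q_4$ already number $3s-1=\alpha$. So $u$ and $v$ have no common neighbour outside $y^{\perp}$.
\item Take $q\in Q_1$. The same count shows $G(q)\cap y^{\perp}=\{y\}\cup Q_0\cup(Q_1\setminus\{q\})\cup Q_2$. It also shows that $y$ and every $q'\in Q_1\setminus\{q\}$ have the same closed neighbourhood in $G(q)$.
\item So in $G(q)\cong C_5[K_s]$, the set $\{y\}\cup(Q_1\setminus\{q\})$ is one of the five $s$-cliques, and its two neighbouring cliques are $Q_0$ and $Q_2$.
\item The other clique $T$ of $G(q)$ adjacent to $Q_0$ therefore lies outside $y^{\perp}$. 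Any $t\in T$ is a common neighbour of $u$ and $v$ outside $y^{\perp}$, a contradiction.
\end{itemize}
Alternatively, apply Theorem~\ref{thm:1.16.3}: the reduced local graph is the pentagon, so $s\le\bar\alpha+1=1$. This is exactly how the paper handles the analogous situation in Case~2 of the proof of Theorem~\ref{ARTG}. Once $s=1$, $G$ is locally a pentagon and hence the icosahedron.
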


An amply regular graph with diameter 2 is called a {\it strongly regular} graph.
The eigenvalues of a strongly regular graph can be determined by its parameters.

\begin{theorem}[{\cite[Chapter 10]{GR01}}]\label{lam}
    Let $G$ be a strongly regular graph with parameters $(n,d,\alpha,\allowbreak \beta)$. Then 
    $G$ has exactly three distinct Laplacian eigenvalues:
          \begin{align*}
            &\lambda_0=0,\\
            &\lambda_1=1-\frac{(\alpha-\beta)+\sqrt{(\alpha-\beta)^2+4(d-\beta)}}{2d},\\
            &\lambda_{n-1}=1-\frac{(\alpha-\beta)-\sqrt{(\alpha-\beta)^2+4(d-\beta)}}{2d}.
          \end{align*}
%The corresponding multiplicities are 
%\begin{align*}
  %&m_1=1,\\
  %&m_2=\frac{1}{2}\biggl((n-1)-\frac{2d+(n-1)(\alpha-\beta)}{\sqrt{(\alpha-\beta)^2+4(d-\beta)}}\biggr),\\
  %&m_3=\frac{1}{2}\biggl((n-1)+\frac{2d+(n-1)(\alpha-\beta)}{\sqrt{(\alpha-\beta)^2+4(d-\beta)}}\biggr).
%\end{align*}
\end{theorem}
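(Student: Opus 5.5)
The plan is to derive a quadratic matrix identity for the adjacency matrix $A$ from the parameters $(n,d,\alpha,\beta)$, and then use the fact that $G$ is regular, so that $L=I-\frac1d A$. In a regular graph $D=dI$, hence $L$ and $A$ share eigenvectors and every adjacency eigenvalue $\theta$ gives the Laplacian eigenvalue $1-\theta/d$. It therefore suffices to determine the adjacency spectrum.

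\textbf{Step 1 (the matrix identity).} Let $J$ be the all-ones matrix. Since $G$ has diameter $2$, every pair of distinct vertices is either adjacent or at distance $2$. The $(x,y)$ entry of $A^2$ counts common neighbours, so counting them according to the relation between $x$ and $y$ gives
\[
A^2 = dI+\alpha A+\beta(J-I-A).
\]

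\textbf{Step 2 (reducing to a quadratic).} The all-ones vector $\mathbf 1$ is an eigenvector of $A$ with eigenvalue $d$, which gives $\lambda_0=0$. Since $G$ is connected, $d$ has multiplicity one by Perron--Frobenius. Because $A$ is symmetric, we may take the remaining eigenvectors $v$ orthogonal to $\mathbf 1$, so that $Jv=0$. For such a $v$ with $Av=\theta v$, Step 1 yields
\[
\theta^2-(\alpha-\beta)\theta-(d-\beta)=0,
\]
so
\[
\theta_\pm=\frac{(\alpha-\beta)\pm\sqrt{(\alpha-\beta)^2+4(d-\beta)}}{2}.
\]
Substituting into $\lambda=1-\theta/d$ gives exactly the two displayed values. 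The larger adjacency root $\theta_+$ corresponds to the smaller nonzero Laplacian eigenvalue $\lambda_1$, and $\theta_-$ corresponds to $\lambda_{n-1}$.

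\textbf{Step 3 (exactly three distinct eigenvalues).} This is the only point needing care, and it has three parts.
\begin{enumerate}
\item[(a)] The two roots are distinct. We have $d\ge\beta$, and if $d=\beta$ then $\alpha\le d-1<\beta$. In either case the discriminant $(\alpha-\beta)^2+4(d-\beta)$ is positive.
\item[(b)] Both roots actually occur. Suppose only one root $\theta$ occurred on $\mathbf 1^\perp$. Then $A-\theta I$ would vanish on $\mathbf 1^\perp$, so $A$ would be a linear combination of $I$ and $J$. This forces $G$ to be complete or edgeless, both of which are excluded since $G$ is connected and non-complete.
\item[(c)] Neither root equals $d$. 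This follows because the eigenvalue $d$ is simple and its eigenspace is spanned by $\mathbf 1$, while $\theta_\pm$ arise on $\mathbf 1^\perp$. Alternatively, using $d(d-\alpha-1)=\beta(n-1-d)$ one checks directly that $d$ does not satisfy the quadratic.
\end{enumerate}
Consequently the spectrum of $L$ consists of exactly $0,\lambda_1,\lambda_{n-1}$, as claimed.
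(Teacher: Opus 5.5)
Your argument is correct. You combine the identity $A^2=dI+\alpha A+\beta(J-I-A)$, the restriction to $\mathbf 1^\perp$, and $L=I-\frac1d A$ with the checks that the two roots are distinct, both occur, and differ from $d$; this gives exactly the three stated normalized Laplacian eigenvalues. The paper does not prove this statement itself but cites Godsil--Royle, Chapter 10, whose proof is essentially this same matrix-identity argument.
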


\begin{remark}
    Let $G$ be a connected $d$-regular graph on $n$ vertices. Then the adjacency eigenvalues $d=\theta_0> \theta_1\ge \theta_2\ge \cdots \ge \theta_{n-1}$ and the (normalized) Laplacian eigenvalues $0=\lambda_0< \lambda_1\le \lambda_2\le \cdots \le \lambda_{n-1}$ of $G$ satisfy $\theta_i= d-d\lambda_i$ for $0\le i\le n-1$.
\end{remark}

The parameters of a strongly regular graph are related by the following identity.

\begin{theorem}[{\cite[Chapter 10]{GR01}}]\label{lem:param_relation}
    Let $G$ be a strongly regular graph with parameters $(n, d, \alpha,\allowbreak \beta)$. Then its parameters satisfy the following relation:
    \[
    n - d - 1 = \frac{d(d - \alpha - 1)}{\beta}.
    \]
\end{theorem}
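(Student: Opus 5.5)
We prove the identity of Theorem \ref{lem:param_relation} by double counting. Fix a vertex $x$ of $G$. Let $N(x)$ be its set of $d$ neighbours, and let $M(x)$ be the set of vertices other than $x$ that are not adjacent to $x$. Then $|M(x)| = n-d-1$. Because $G$ is strongly regular, it has diameter $2$, so every vertex of $M(x)$ is at distance exactly $2$ from $x$. We count the set $\mathcal{E}$ of edges $uv$ with $u\in N(x)$ and $v\in M(x)$ in two ways.

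\emph{Counting from $N(x)$.} Take $u\in N(x)$. Among its $d$ neighbours, one is $x$ itself. Exactly $\alpha$ others lie in $N(x)$, namely the common neighbours of the adjacent pair $x,u$. The remaining $d-\alpha-1$ neighbours of $u$ are neither $x$ nor adjacent to $x$, so they lie in $M(x)$. Hence $|\mathcal{E}| = d(d-\alpha-1)$.

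\emph{Counting from $M(x)$.} Take $v\in M(x)$. Since $d(x,v)=2$, $v$ has exactly $\beta$ common neighbours with $x$, and these are precisely the neighbours of $v$ inside $N(x)$. Hence $|\mathcal{E}| = \beta(n-d-1)$.

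Equating the two counts gives $\beta(n-d-1)=d(d-\alpha-1)$. Dividing by $\beta$ gives the stated formula. The division is legitimate because $\beta\ge 1$: $G$ is connected and non-complete, so some pair of vertices is at distance $2$, and such a pair has a common neighbour on a shortest path.

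The only point needing care is that every vertex of $M(x)$ is at distance exactly $2$ from $x$. This uses the diameter-$2$ hypothesis in the definition of strongly regular graphs; without it, the second count would be wrong.
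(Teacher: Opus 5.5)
Your double-counting argument is correct: counting the edges between $N(x)$ and the $n-d-1$ non-neighbours of $x$ gives $d(d-\alpha-1)=\beta(n-d-1)$, and your justification that $\beta\ge 1$ (from connectedness, non-completeness and diameter $2$) makes the division legitimate. The paper does not prove this identity and simply cites Godsil--Royle, where it is derived by exactly this count, so your route is the standard one.
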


We conclude this subsection by presenting two well-known classification results on strongly regular graphs, where the first one is due to Seidel. 

\begin{theorem}[{\cite[Theorem 1.1.1]{BV22}}]\label{thm:1.1.1}
  A strongly regular graph with the least adjacency eigenvalue $-2$ is one of the examples in (i)–(viii):
\begin{enumerate}
\renewcommand{\labelenumi}{\textup{(\roman{enumi})}}

\item the cocktail party graphs $CP(n)$, with parameters $(2n,2n-2,2n-4,2n-2)$ for $n \geq 2$;

\item the Hamming graphs $H(2,n)$, with parameters $(n^{2},2(n-1),n-2,2)$ for $n \geq 2$;

\item the triangular graphs $T(n)$, with parameters $\bigl(\binom{n}{2},2(n-2),
n-2,4\bigr)$ for $n \geq 4$;

\item the Shrikhande graph, with parameters $(16,6,2,2)$;

\item the three Chang graphs, with parameters $(28,12,6,4)$;

\item the Petersen graph, with parameters $(10,3,0,1)$;

\item the Clebsch graph, with parameters $(16,10,6,6)$;

\item the Schläfli graph, with parameters $(27,16,10,8)$.

\end{enumerate}
\end{theorem}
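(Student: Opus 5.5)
The statement is Seidel's classical classification. I would prove it through the root-system method of Cameron, Goethals, Seidel and Shult rather than through curvature. Let $G$ be strongly regular with parameters $(n,d,\alpha,\beta)$ and least adjacency eigenvalue $-2$. Then $A+2I$ is positive semidefinite, with diagonal entries $2$ and off-diagonal entries in $\{0,1\}$. So $A+2I$ is the Gram matrix of a set of vectors of norm $2$ with pairwise inner products $0$ or $1$. The lattice these vectors span is an integral lattice generated by norm-$2$ vectors, hence a root lattice, i.e. a direct sum of lattices of types $A_m$, $D_m$, $E_6$, $E_7$, $E_8$. Connectivity of $G$ forces the lattice to be irreducible.

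The argument then splits into two cases. In the first case the lattice is of type $A_m$ or $D_m$. Then $G$ is a generalized line graph $L(H;a_1,\dots,a_k)$, meaning $H$ has pendant "petals" (pairs of parallel edges) attached at some vertices.
\begin{itemize}
\item If all $a_i=0$, then $G=L(H)$. Constancy of $\alpha$ and $\beta$ forces $H$ to be regular or semiregular bipartite, with every two non-incident edges joined by the same number of edges. A short count then leaves $H=K_m$, giving $T(m)$ for $m\ge 4$, or $H=K_{m,m}$, giving $H(2,m)$. Small coincidences such as $L(K_4)=CP(3)$ are absorbed into the list.
\item If some $a_i>0$, the petals create non-adjacent pairs whose common-neighbour counts differ from those of other pairs. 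Equating the two counts forces $H$ to be a single vertex, which gives $CP(n)$.
\end{itemize}

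In the second case the lattice is of type $E_6$, $E_7$ or $E_8$. Then $G$ is an exceptional graph, representable inside the $E_8$ root system. This gives a uniform bound: the vectors lie among the $240$ roots, and a standard argument gives $n\le 36$ and $d\le 28$. Next I would list the feasible parameter sets. Theorem~\ref{lam} gives the eigenvalue $-2$ explicitly, and Theorem~\ref{lem:param_relation} relates $n$ to the other parameters. Requiring the second eigenvalue $r$ and both multiplicities to be integers, and noting that the multiplicity of $-2$ is at most $8$ for an exceptional graph, leaves only finitely many parameter sets. I expect them to be
\[(10,3,0,1),\ (16,6,2,2),\ (16,10,6,6),\ (27,16,10,8),\ (28,12,6,4),\]
together with sets already realised by line graphs.

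The main obstacle is the final step: determining every graph with each of these parameter sets. For $(10,3,0,1)$, $(16,10,6,6)$ and $(27,16,10,8)$ I would prove uniqueness by a local argument. The first subconstituent has least eigenvalue at least $-2$ and is itself a small strongly regular or complete multipartite graph, and I would reconstruct $G$ from it. This yields the Petersen, Clebsch and Schl\"afli graphs.

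For $(16,6,2,2)$ and $(28,12,6,4)$, uniqueness fails. The first subconstituent is a disjoint union of small cliques or cycles. Alternatively, Seidel switching relates every such graph to $H(2,4)$, respectively $T(8)$. Enumerating the switching classes with the correct parameters leaves exactly the Shrikhande graph, respectively the three Chang graphs, besides the line graphs themselves. This enumeration is the most delicate part, and I would carry it out by classifying the regular switching sets inside $H(2,4)$ and $T(8)$.
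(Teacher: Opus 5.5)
The paper does not prove this statement; it imports Seidel's theorem from Brouwer--Van Maldeghem. Your Cameron--Goethals--Seidel--Shult outline is the standard route, but the step meant to make the exceptional case finite is wrong as written. The representing vectors lie in $\mathbb{R}^8$, so what you actually get is $\operatorname{rank}(A+2I)\le 8$, i.e.\ $1+f\le 8$, where $f$ is the multiplicity of the remaining eigenvalue $r$. The multiplicity of $-2$ is therefore at \emph{least} $n-8$, not at most $8$. With your inequality you would discard $(16,6,2,2)$, $(16,10,6,6)$, $(27,16,10,8)$ and $(28,12,6,4)$, whose $-2$-multiplicities are $9,10,20,20$. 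Of the five exceptional sets you expect, only the Petersen parameters would survive.

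Even with the corrected bound $f\le 7$ and $n\le 36$, integrality does not produce your list. Write $\beta=d-2r$, $\alpha=d-r-2$ and $f=(2n-2-d)/(r+2)$. Then $(21,16,12,12)$ and $(25,18,13,12)$ also pass, though their complements would have $\bar\alpha=-1$. So does $(28,18,12,10)$, which is excluded only by a Krein condition; its complement would be $(28,9,0,4)$. These extra feasibility checks have to be part of the argument.

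The second gap is that you cannot set aside the sets ``already realised by line graphs''. The corrected filter also passes $(9,4,1,2)$, $(10,6,3,4)$, $(15,8,4,4)$, $(21,10,5,4)$ and the $CP(m)$ parameters for $m\le 7$. An exceptional graph can share its parameters with a line graph---that is exactly what the Shrikhande and Chang graphs do. So for each of these sets you must prove the line graph is the only realisation. This is immediate for $CP(m)$, whose complement is a perfect matching, and for $(10,6,3,4)$, whose complement is $(10,3,0,1)$. For $H(2,3)$, $T(6)$ and $T(7)$, however, it is the content of the Shrikhande and Chang--Hoffman characterisations, which your plan never invokes.

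Two smaller points. The first subconstituent is a union of cliques or cycles for $(16,6,2,2)$, but not for $(28,12,6,4)$: in $T(8)$ it is $K_6\square K_2$. There the switching enumeration has to do all the work. Also, a correct line-graph count yields $L(C_5)=C_5$ as well. It is excluded only because its least eigenvalue is $-(1+\sqrt5)/2\ne -2$, not by the count itself.
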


\begin{theorem}[{\cite{HS}}] \label{thm:HS_Moore_srg}
Let $H$ be a strongly regular graph with parameters $(n,d,0,1)$.
Then $d\in\{2,3,7,57\}$ and $n=d^{2}+1$.
Moreover,
\begin{itemize}
\item if $d=2$, then $H$ is the pentagon;
\item if $d=3$, then $H$ is the Petersen graph;
\item if $d=7$, then $H$ is the Hoffman--Singleton graph.
\end{itemize}
\end{theorem}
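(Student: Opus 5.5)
The plan is the classical eigenvalue argument of Hoffman and Singleton, followed by separate uniqueness arguments for $d=2,3,7$.

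\textbf{Step 1: the vertex count.} Theorem~\ref{lem:param_relation} with $\alpha=0$ and $\beta=1$ gives $n-d-1=d(d-1)$. Hence $n=d^2+1$.

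\textbf{Step 2: eigenvalues and multiplicities.} The adjacency matrix $A$ satisfies $A^2+A-(d-1)I=J$. By Theorem~\ref{lam} and the Remark, the non-principal adjacency eigenvalues are the roots $r,s$ of $x^2+x-(d-1)=0$, namely
\[
r,s=\frac{-1\pm m}{2}, \qquad m=\sqrt{4d-3}.
\]
Let $f,g$ be their multiplicities, so $f+g=n-1=d^2$. Taking the trace of $A$ gives $d+fr+gs=0$. Since $r+s=-1$ and $r-s=m$, this becomes
\[
2d-d^2+(f-g)m=0 .
\]

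\textbf{Step 3: the case analysis on $m$.}
\begin{itemize}
\item If $m$ is irrational, then $f=g$, which forces $d^2=2d$, i.e.\ $d=2$.
\item Otherwise $m$ is an integer and $d=(m^2+3)/4$. Substituting gives
\[
16(f-g)m=(m^2+3)(m^2-5).
\]
Reducing modulo $m$ shows $m\mid 15$, so $m\in\{1,3,5,15\}$ and $d\in\{1,3,7,57\}$.
\item The value $d=1$ gives $n=2$, i.e.\ $K_2$. This is complete, hence not amply regular in the paper's sense, and is excluded.
\end{itemize}
This proves $d\in\{2,3,7,57\}$ and $n=d^2+1$.

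\textbf{Step 4: uniqueness for $d=2$ and $d=3$.} Note that $\alpha=0$ and $\beta=1$ mean exactly that $H$ has girth $5$ and diameter $2$.
\begin{itemize}
\item For $d=2$, $H$ is a connected $2$-regular graph on $5$ vertices, which is the pentagon.
\item For $d=3$, fix a vertex $v$ with neighbours $a,b,c$. The remaining $6$ vertices are at distance $2$ from $v$, with two hanging below each neighbour. Each vertex below $a$ has exactly one neighbour below $b$ and one below $c$, since $\beta=1$ and there are no $4$-cycles. Up to relabelling, this matching structure is forced, and it yields the Petersen graph.
\end{itemize}

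\textbf{Step 5: uniqueness for $d=7$ (the main obstacle).} Here $n=50$. I would follow the standard argument. Fix an edge $uv$. The $6+6$ further neighbours of $u$ and $v$ span $2\cdot 6\cdot 6$ constraints, and the remaining $36$ vertices are in bijection with pairs $(x,y)$ with $x\in N(u)\setminus\{v\}$ and $y\in N(v)\setminus\{u\}$. The adjacencies among these $36$ vertices are then governed by Latin-square-like conditions coming from girth $5$ and $\beta=1$. One shows these conditions determine the structure up to isomorphism, recovering the Hoffman--Singleton graph in its description by $5$ pentagons and $5$ pentagrams.

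Alternatively, since this is a classical result, I would simply cite \cite{HS} for Step 5. It is the only genuinely laborious step, and I expect it to be the main obstacle.
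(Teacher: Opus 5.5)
The paper gives no argument for this statement; it is quoted from Hoffman and Singleton \cite{HS}. Your Steps 1--3 reconstruct the classical eigenvalue--multiplicity argument of that source, and they check out:
\begin{itemize}
\item Theorem~\ref{lem:param_relation} gives $n=d^2+1$.
\item The trace identity $2d-d^2+(f-g)m=0$ is correct.
\item If $m$ is irrational, then $f=g$, which forces $d=2$.
\item If $m$ is rational, it is an integer because $m^2=4d-3\in\mathbb{Z}$. The identity $16(f-g)m=(m^2+3)(m^2-5)$ then reduces modulo $m$ to $m\mid 15$.
\item The value $d=1$ is rightly excluded, since $K_2$ is complete.
\end{itemize}
Step 4 is also fine. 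To make ``forced'' precise for $d=3$: each of the six vertices at distance $2$ from $v$ has exactly two neighbours among those six, so they induce a triangle-free $2$-regular graph on six vertices, i.e.\ a hexagon. The pair below each of $a,b,c$ is non-adjacent and has no common neighbour in the hexagon (that would give two common neighbours at distance $2$). Hence the pairs are antipodal, which yields the Petersen graph.

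The one place where your text is not a proof is Step 5. Indexing the remaining $36$ vertices by pairs $(x,y)$ is correct: each such vertex has a unique common neighbour with $u$ and a unique one with $v$, and conversely each pair $(x,y)$ has a unique common neighbour lying outside $N(u)\cup N(v)$. The phrase about ``$2\cdot 6\cdot 6$ constraints'' is too vague to carry any weight, though. More importantly, the claim that the resulting Latin-square-like conditions determine the graph up to isomorphism is the entire content of the $d=7$ uniqueness theorem, and you do not carry it out.

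So citing \cite{HS} for $d=7$ is not an optional alternative; it is necessary. With that citation, your proof stands on the same footing as the paper's, which cites \cite{HS} for the whole theorem. What your version adds is that the numerical part ($n=d^2+1$ and $d\in\{2,3,7,57\}$) and the small uniqueness cases become self-contained, using only facts already stated in the paper. That numerical part is all Theorem~\ref{ex} needs; the uniqueness is used only to name the graphs in Theorem~\ref{ARTG}.
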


In particular, we mention that the existence of a strongly regular graph with parameters $(3250,57,0,\allowbreak 1)$ is a long-standing and well-known open problem in algebraic graph theory.

\subsection{Distance-regular graphs}
In this subsection, we recall several known results on distance-regular graphs that will be used in our proof.

\begin{theorem}[{\cite[Theorem 4.4.3 (ii)]{BCN89}}]\label{thm:4.4.3}
 Let $G$ be a distance-regular graph of diameter $D\ge 3$ with the second largest eigenvalue $\theta_1$, and put
$b^+ = \frac{b_1}{\theta_1 + 1}$.
If $b^+ < 1$, then either $\beta = 1$ or $G$ is the icosahedron.
%Then $b^+ > 0$, $b^- < -1$, and we have:
    %\item[(i)] Each neighbourhood $\Gamma(\gamma)$ is a graph with smallest eigenvalue $\geq -1 - b^+$ and second largest eigenvalue $\leq -1 - b^-$ (here the second largest eigenvalue is taken to be the valency $\lambda$ in case $\Gamma(\gamma)$ is disconnected).
    %\item[(ii)] If $b^+ < 1$ then either $\mu = 1$ or $\Gamma$ is the icosahedron. 
    %\item[(iii)] If $b^- > -2$ then either $\lambda = 0$ or $\Gamma$ is the icosahedron.
\end{theorem}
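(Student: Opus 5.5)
The plan is to go through the standard representation of $G$ by the eigenspace of $\theta_1$ and to localize it at a vertex. Let $u_0=1,\ u_1=\theta_1/d,\ u_2,\dots$ be the cosine sequence of $\theta_1$. It is determined by $c_iu_{i-1}+a_iu_i+b_iu_{i+1}=\theta_1u_i$. Let $x\mapsto \hat x$ be the normalized representation in the $\theta_1$-eigenspace, so that $\langle \hat x,\hat y\rangle=u_{d(x,y)}$. Fix a vertex $y$ and project the vectors $\hat z$, $z\sim y$, onto the orthogonal complement of $\hat y$. Their Gram matrix is a combination of $I$, $J$ and the adjacency matrix $A_y$ of the local graph $\Gamma(y)$, with coefficients expressed through $u_0,u_1,u_2$.

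The first step is to use positive semidefiniteness of this Gram matrix on $\mathbf 1^\perp$, together with the recurrence to eliminate $u_2$. This should show that the least eigenvalue of $\Gamma(y)$ is at least $-1-b^+$, where $b^+=b_1/(\theta_1+1)$. We need $D\ge3$ so that $\theta_1$ is not the value making the denominators degenerate, and so that $\theta_1+1>0$. Under the hypothesis $b^+<1$, every local graph $\Gamma(y)$ is $\alpha$-regular with least eigenvalue strictly greater than $-2$.

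Next I would exploit this spectral condition combinatorially. A graph with least eigenvalue $>-2$ contains no induced $K_{1,4}$ and no induced subgraph with eigenvalue $\le -2$. Suppose $\beta\ge2$, and take $x,z$ at distance $2$ with two common neighbours $y_1,y_2$.
\begin{itemize}
\item If $y_1\not\sim y_2$, then $x,z\in\Gamma(y_1)\cap\Gamma(y_2)$ produce an induced quadrangle $x y_1 z y_2$. Combining it with neighbours in the local graphs, I expect to build a forbidden configuration in $\Gamma(y_1)$ (eigenvalue $\le -2$).
\item Otherwise the $\mu$-graphs are cliques. Then $\Gamma(y)$ is a regular graph with least eigenvalue $>-2$ in which nonadjacent vertices have $\beta-1\ge1$ common neighbours.
\end{itemize}
The connected regular graphs with least eigenvalue $>-2$ are known: complete graphs and odd cycles (the exceptional root-system cases $E_6,E_7,E_8$ give eigenvalue $\ge -2$ but not regular ones beyond small cases, which I would check by hand). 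Connectedness of $\Gamma(y)$ with a common-neighbour condition then forces either $\Gamma(y)$ complete, which is impossible since $G$ is not complete, or $\Gamma(y)$ a pentagon. Local pentagon graphs give exactly the icosahedron.

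The main obstacle is this last step: ruling out, via the smallest-eigenvalue bound, all possible local graphs except the pentagon when $\beta\ge 2$. It relies on the classification of graphs with least eigenvalue greater than $-2$ (generalized line graphs and the finitely many exceptional ones), with the regularity of $\Gamma(y)$ used to cut the list down. I would first try to avoid the full classification. The idea is to show that $\beta\ge2$ forces an induced $K_{1,4}$ or an induced even-cycle-with-chord configuration in some $\Gamma(y)$, and to fall back on the classification only for the small residual cases.
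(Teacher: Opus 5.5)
\textbf{There is a genuine gap, in your first bullet.} (The paper gives no proof here; it quotes the result from BCN, so your argument has to stand on its own.)

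Your first step is sound. The Gram matrix of the vectors $\hat z-u_1\hat y$, $z\sim y$, gives least eigenvalue $\ge -1-b^+$ for every local graph. This only needs $\theta_1+1>0$, which holds for every connected non-complete graph; $D\ge 3$ plays no role there.

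After that step, however, you use only the qualitative fact that every $\Gamma(y)$ has least eigenvalue $>-2$. That fact cannot imply the conclusion. The Hamming graphs $H(D,q)$ with $D\ge 3$ have local graphs $DK_{q-1}$, with least eigenvalue $-1$, or $0$ for the cube. They have $\beta=2$, and they are not the icosahedron. So in the induced-quadrangle case there is no forbidden configuration to find in $\Gamma(y_1)$ or any other local graph: the cube's local graphs are edgeless. Your plan to force an induced $K_{1,4}$ or an even cycle with a chord in some $\Gamma(y)$ must fail. What separates these examples from the hypothesis is that they have $b^+=1$ exactly. So the strict inequality $b^+<1$ has to be used a second time, quantitatively.

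\textbf{The missing idea} is to apply the eigenspace representation to the quadrangle itself. If $xy_1zy_2$ is an induced quadrangle, then
\[
0\le \|\hat x+\hat z-\hat y_1-\hat y_2\|^2=4(1-2u_1+u_2).
\]
Use $u_1=\theta_1/d$ and the recurrence at $i=1$, namely $b_1u_2=(\theta_1-\alpha)u_1-1$, together with $\alpha=d-b_1-1$. Then
\[
d\,b_1(1-2u_1+u_2)=(\theta_1-d)(\theta_1-b_1+1).
\]
Since $\theta_1<d$, this gives $\theta_1\le b_1-1$, i.e.\ $b^+\ge 1$. Hence $b^+<1$ rules out induced quadrangles. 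If $\beta\ge 2$, every $\mu$-graph is then a clique and $G$ is a Terwilliger graph.

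Only now does your second bullet apply. As written, it silently assumes that all $\mu$-graphs are cliques, which is exactly what the first bullet was supposed to supply. With that in hand, the argument finishes as follows:
\begin{enumerate}[(1)]
\item For nonadjacent $v,w\in\Gamma(y)$, their $\beta$ common neighbours form a clique through $y$. So $v,w$ have $\beta-1\ge 1$ common neighbours inside $\Gamma(y)$.
\item Hence $\Gamma(y)$ is connected of diameter $2$. It is non-complete, since $\alpha=d-1$ would make $G$ complete.
\item It is $\alpha$-regular with least eigenvalue $>-2$. By the known theorem that connected regular graphs with least eigenvalue $>-2$ are exactly complete graphs and odd cycles, $\Gamma(y)$ is a pentagon.
\item A graph that is locally a pentagon is the icosahedron.
\end{enumerate}
No case-by-case check of the $E_6,E_7,E_8$ exceptions is needed, since that classification already covers them.
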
   

\begin{remark}
    The condition $b^+ < 1$ in Theorem \ref{thm:4.4.3} is equivalent to $\lambda_1< (\alpha +2)/d$.
\end{remark}

We conclude this subsection by presenting two classical classification results on distance-regular graphs, the second of which is due to Hall.

\begin{theorem}[{\cite[Theorem 4.4.11]{BCN89}}]\label{thm:4.4.11}
Let $G$ be a distance-regular graph with second largest eigenvalue $\theta_1 = b_1 - 1$. Then at least one of the following holds\textup{:}
\begin{enumerate}
    \item[(i)] $G$ is a strongly regular graph with least eigenvalue $-2$;
    
    \item[(ii)] $\beta = 1$;
    
    \item[(iii)] $\beta = 2$, and $G$ is a \textit{Hamming graph}, a \textit{Doob graph}, the Doro graph, or the Conway--Smith graph;
    
    \item[(iv)] $\beta = 4$, and $G$ is a \textit{Johnson graph};
    
    \item[(v)] $\beta = 6$, and $G$ is a \textit{demi-cube};
    
    \item[(vi)] $\beta = 10$, and $G$ is the \textit{Gosset graph}.
\end{enumerate}
\end{theorem}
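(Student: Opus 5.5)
The statement to prove is Theorem \ref{thm:4.4.11}, the classification of distance-regular graphs with $\theta_1=b_1-1$. It is quoted from \cite{BCN89}, so in the paper we would simply cite it. If a proof is required, we would follow the classical route: show that $\theta_1=b_1-1$ forces the graph to have smallest eigenvalue $-2$ locally, then use root-system classifications. The first step is to analyze local graphs via $b^+=b_1/(\theta_1+1)$. Under $\theta_1=b_1-1$ we get $b^+=1$. By the local eigenvalue interlacing result behind Theorem \ref{thm:4.4.3}, i.e.\ part (i) of \cite[Theorem 4.4.3]{BCN89}, each local graph $\Gamma(x)$ then has smallest eigenvalue at least $-1-b^+=-2$.

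We would extract more from equality. One uses the standard sequence $u_i=u_i(\theta_1)$ of the eigenvalue $\theta_1$, with $u_0=1$, $u_1=\theta_1/d$, $u_2=(\theta_1^2-a_1\theta_1-d)/(d b_1)$. Equality $b^+=1$ should make the Gram matrix of the projected vectors on the neighborhood degenerate in a controlled way. Concretely, for vertices $y,z$ at distance $2$, the common neighbourhood must induce a graph in which every vertex of $\Gamma(y)$ outside $\{x\}\cup\Gamma(x)$ behaves like a \emph{root}. From this we aim to show that the graph $G$ itself is the collinearity graph of a structure whose vertex vectors, scaled suitably, form a representation with norm $2$ and inner products in $\{0,1\}$. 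That is, $G$ has a representation in a root lattice, so $\theta_{\min}$ is bounded and $G$ is locally a graph with smallest eigenvalue $\ge -2$.

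Next we split on $\beta$. If $\beta=1$ we are in case (ii) and stop. If $\beta\ge 2$, the $\mu$-graphs, i.e.\ the common neighbourhoods of vertices at distance $2$, together with the local structure of smallest eigenvalue $-2$, should force $G$ to be determined by its local graphs. When $D=2$, $G$ is strongly regular and the local condition lifts to least eigenvalue $-2$ for $G$, giving case (i) via Theorem \ref{thm:1.1.1}. When $D\ge3$, the local graphs are connected graphs with smallest eigenvalue $\ge -2$ that are themselves strongly regular or amply regular: cocktail party, grid, triangular, Schläfli, and so on. Known characterizations of graphs that are locally such a graph then pin down $G$. Locally a grid gives Johnson graphs; locally a disjoint union of cliques with $\beta=2$ gives Hamming, Doob, Doro and Conway--Smith; locally $T(n)$ gives the halved cubes; locally Schläfli gives Gosset. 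The possible values $\beta\in\{2,4,6,10\}$ come from the $\mu$-graph being a $2K_1$, a $4$-cycle, $K_{2,2,2}$-like, or a $10$-vertex root subgraph.

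The main obstacle is the local characterization step for $D\ge 3$. This requires the full classification of graphs with smallest eigenvalue $-2$ (generalized line graphs and exceptional $E_8$ graphs). It also requires the "locally $X$" uniqueness theorems, which are substantial, and the Doob and Conway--Smith cases in particular need separate ad hoc arguments. Given this, the honest plan within this paper is to invoke \cite[Theorem 4.4.11]{BCN89} directly rather than reprove it.
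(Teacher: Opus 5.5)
Your operative plan is to state the result with a citation to \cite[Theorem 4.4.11]{BCN89} and not reprove it. That is exactly what the paper does; it gives no proof of this theorem at all. The problem is the ``classical route'' you sketch around the citation. Several of its concrete steps are false, so it should not be offered even as an outline.

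\emph{The representation step.} A representation of $G$ by norm-$2$ vectors with inner product $1$ on edges and $0$ on non-edges means $A+2I\succeq 0$, i.e.\ $\theta_{\min}(G)\ge -2$. This fails for exactly the graphs you want to reach when $D\ge 3$: $H(D,q)$ has least eigenvalue $-D$, $J(n,k)$ with $k\le n/2$ has $-k$, and the Gosset graph has $-3$. The root-lattice picture for the Hamming, Johnson, half-cube and Gosset graphs is instead an embedding with $\|v_x-v_y\|^2=2d(x,y)$. That is a statement about the path metric, not a Gram representation of $A$.

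\emph{The local analysis.} Hamming graphs are locally $DK_{q-1}$, which is disconnected, so the local graphs need not be connected. The Shrikhande graph is locally a hexagon, so Doob graphs are locally a disjoint union of triangles and hexagons, not of cliques. The Doro and Conway--Smith graphs are locally Petersen. Since Petersen has $\mu=1$, their two common neighbours at distance $2$ are adjacent, so their $\mu$-graphs are $K_2$, not $2K_1$. Also, ``locally a grid gives Johnson'' fails as stated: the folded Johnson graphs $\bar J(2k,k)$, $k\ge 6$, are distance-regular of diameter at least $3$ and locally $k\times k$ grids. The eigenvalue hypothesis would have to be used again at that stage.

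\emph{The case $D=2$.} A lower bound $-2$ on the local least eigenvalues does not lift to $G$: the srg$(27,10,1,5)$ is locally $5K_2$ but has least eigenvalue $-5$. No local argument is needed here. The value $\theta_1=b_1-1=d-\alpha-2$ must be a root of $x^2-(\alpha-\beta)x-(d-\beta)$. The polynomial takes the value $(d-\alpha-1)(d-2\alpha+\beta-4)$ there, so $d=2\alpha-\beta+4$ and the other root is $-2$. This is the computation already carried out in Lemma \ref{thm:srg_eigenvalue} and in Case 1 of the proof of Theorem \ref{thm3}.
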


We remark that the Doro graph and the Conway--Smith graph are both locally Petersen graphs. Indeed, there are only three locally Petersen graphs \cite{H}, namely the Doro graph,  the Conway--Smith graph, and the complement of the triangular graph $T(7)$. The Doro graph is also known as the Hall graph. In this paper, we follow the terminology of \cite{BCN89} and refer to it as the Doro graph; see \cite[Section 12.2]{BCN89} for further details.

%\begin{theorem}[{\cite[Theorem 1.16.5]{BCN89}}]\label{thm:1.16.5} 
%There are up to isomorphism precisely three connected locally Petersen graphs, namely:
%\begin{enumerate}
    %\item[(i)] The unique distance-regular graph with $21$ vertices and intersection array $\{10,6; 1,6\}$: the complement of the triangular graph $T(7)$.
    
    %\item[(ii)] The unique distance-regular graph with $63$ vertices and intersection array $\{\allowbreak 10,\allowbreak 6,\allowbreak 4,\allowbreak 1;\allowbreak 1,\allowbreak 2,\allowbreak 6,\allowbreak 10\}$: the Conway--Smith graph.
    
    %\item[(iii)] The unique distance-regular graph with $65$ vertices and intersection array $\{\allowbreak 10,\allowbreak 6,\allowbreak 4; \allowbreak 1,\allowbreak 2,\allowbreak 5\}$: the Doro graph.
%\end{enumerate}
%\end{theorem}

\subsection{Terwilliger graphs}
We first present the following observation.

\begin{lemma}\label{beta=1}
    An amply regular graph $G$ with parameters $(n,d,\alpha,1)$ is a Terwilliger graph.
\end{lemma}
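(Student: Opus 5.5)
The claim follows directly from the definitions: with $\beta=1$, the common neighborhood of any two vertices at distance $2$ is a single vertex, and a single vertex is trivially a clique of size $\beta=1$.

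First, $G$ is non-complete by the convention adopted in the definition of amply regular graphs. Since $G$ is also connected, it contains a pair of vertices at distance exactly $2$. Hence the defining condition of a Terwilliger graph is not vacuous, and the fixed constant $\beta=1\ge 1$ is admissible.

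Next, take any $x,y$ with $d(x,y)=2$. Condition (ii) of amply regularity says $x$ and $y$ have exactly one common neighbor $z$. The subgraph induced on $\{z\}$ is $K_1$, which is a clique of size $1=\beta$. Therefore $G$ is Terwilliger.

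As a consistency check, the equivalent formulation also holds: $G$ has no induced quadrangle. An induced quadrangle $x\,z_1\,y\,z_2$ would give two vertices $x,y$ at distance $2$ with two common neighbors $z_1\ne z_2$, contradicting $\beta=1$.

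There is no real obstacle here. The only point requiring care is checking that the non-completeness and $\beta\ge1$ conventions in the Terwilliger definition are met, and both are built into the hypotheses.
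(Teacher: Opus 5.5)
Your proof is correct and follows essentially the same approach as the paper: with $\beta=1$, the common neighborhood of any two vertices at distance $2$ is a single vertex, which is trivially a clique of size $\beta$. Your extra remarks on non-completeness, on the existence of a pair at distance $2$, and on the absence of induced quadrangles are consistent with the paper's setup but not needed for the argument.
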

\begin{proof}
    Let $u$ and $v$ be two vertices at distance 2 in $G$.
    Since $G$ has parameters $(n,d,\alpha,1)$,
    $u$ and $v$ have exactly one common neighbor.
    That is, the subgraph of $G$ induced by the common neighbor of $u$ and $v$ is a single vertex. So, $G$ is a Terwilliger graph.
\end{proof}

We introduce the following concepts, see \cite{BCN89}, for instance. For a vertex $x$ of a graph $G=(V,E)$, let $G(x)$ be the subgraph of $G$ induced by the neighbors of $x$.
We write $x^{\perp}$ for the set of vertices consisting of $x$ and its neighbors. 
Let $x\equiv y$ if $x^{\perp}=y^{\perp}$, then $\equiv$ is an equivalence relation, and we shall write $\overline{x}$ for the equivalence class of the vertex $x$. Consider an edge $xy\in E$. Then, for any $x'\in \overline{x}$ and $y'\in \overline{y}$, we have $x'y'\in E$. Therefore, the following graph is well-defined: We write $\overline{G}$ for the quotient $G/\equiv$, which has vertices  $\overline{x}$ for $x\in V$, and $\overline{x}$ is adjacent to $\overline{y}$ whenever $xy\in E$ and $\overline{x}\neq\overline{y}$. $\overline{G}$ is called the \emph{reduced graph} of $G$, and $G$ is called \emph{reduced} when all equivalence classes have size one. 

Terwilliger graphs possess many striking properties. Here, we present two of them that are crucial for our later discussion. 

\begin{lemma}[{\cite[Corollary 1.16.6 (i)]{BCN89}}]\label{cor:1.16.6}
Let $G$ be a strongly regular Terwilliger graph with parameters $(n,d,\alpha,\beta)$.
If $n < 50\beta$, then $G$ is the pentagon or the Petersen graph.
%\begin{enumerate}
%\item[\textup{(i)}] If $d = 2$ and $v < 50\mu$, then $G$ is the pentagon or the Petersen graph.
%\item[\textup{(ii)}] If $k < 50(\mu-1)$, then $G$ is the icosahedron \textup{(}$v = 12$, $d = 3$\textup{)} or a locally Petersen graph \textup{(}and then $v = 63$, $d = 4$ or $v = 65$, $d = 3$\textup{)}.
%\end{enumerate}    
\end{lemma}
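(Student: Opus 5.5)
We plan to split according to whether $\beta=1$ or $\beta\ge 2$, and in both cases to reduce to a finite feasibility check using the parameter identity of Theorem \ref{lem:param_relation} and the eigenvalue formulas of Theorem \ref{lam}.

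\emph{Case $\beta=1$.} Any two vertices of $G(x)$ that are non-adjacent are at distance $2$ in $G$, and they already have $x$ as a common neighbour. Since $\beta=1$, $x$ is their only common neighbour, so $G(x)$ contains no induced path on three vertices. Hence $G(x)$ is a disjoint union of cliques $K_{\alpha+1}$, and in particular $(\alpha+1)\mid d$.

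By Theorem \ref{lem:param_relation}, $n=1+d+d(d-\alpha-1)$. The condition $n<50$ then forces $d\le 7$, and leaves only finitely many triples $(n,d,\alpha)$. For each of them we compute the eigenvalues from Theorem \ref{lam}, i.e.\ the adjacency eigenvalues $d-d\lambda_i$, and the corresponding multiplicities.
\begin{itemize}
\item If $\alpha=0$, Theorem \ref{thm:HS_Moore_srg} applies directly. The case $d=7$ gives $n=50$, which is excluded, so only the pentagon and the Petersen graph remain.
\item If $\alpha\ge1$, we must rule out each candidate. We use integrality of the multiplicities together with the divisibility $(\alpha+1)\mid d$; if needed we add the absolute bound or the Krein conditions.
\end{itemize}
This check is routine because the list is short.

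\emph{Case $\beta\ge2$.} Here we argue by induction on $\beta$, following Terwilliger's local analysis. The key claim is that for every vertex $x$, the local graph $\Delta=G(x)$ is amply regular with $\mu$-parameter $\beta-1$, and is again Terwilliger. The justification is as follows.
\begin{itemize}
\item Take $u,w\in G(x)$ non-adjacent. Their common neighbours in $G$ form a $\beta$-clique containing $x$.
\item So their common neighbours inside $\Delta$ are exactly the other $\beta-1$ vertices of that clique, and these are pairwise adjacent.
\item Regularity of $\Delta$ (valency $\alpha$) is immediate.
\item We must also show that $\Delta$ is connected of diameter $2$. We do this by counting: the $\beta$-clique condition together with strong regularity of $G$ leaves no room for pairs at distance $\ge3$ in $\Delta$.
\end{itemize}
If $\Delta$ is disconnected or complete, we expect to derive a contradiction with $\beta\ge2$ by counting quadrangle-free paths.

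Given the claim, induction on $\beta$ yields two alternatives.
\begin{itemize}
\item Either $\Delta$ is a pentagon or a Petersen graph. Then $G$ is locally pentagon or locally Petersen, so $G$ is the icosahedron or one of the three locally Petersen graphs. None of these is a strongly regular Terwilliger graph: the icosahedron has diameter $3$, and $\overline{T(7)}$ has $\beta=6$ with a non-clique $\mu$-graph.
\item Or $d=|\Delta|\ge 50(\beta-1)$.
\end{itemize}
In the second alternative we plug into $n=1+d+d(d-\alpha-1)/\beta$. Using Theorem \ref{lem:d-lowbdd}, which gives $d-\alpha-1\ge \alpha+2-\beta$, together with the Terwilliger inequality, we should obtain $n\ge 50\beta$.

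The main obstacle is the local claim in the second case, namely that $\Delta$ is itself strongly regular, with diameter exactly $2$, and Terwilliger. If the diameter statement cannot be proved directly, we will first try the weaker route of showing only that $\Delta$ is amply regular. We would then run the induction over amply regular Terwilliger graphs, bounding $d$ rather than $n$, and convert to a bound on $n$ only at the end.
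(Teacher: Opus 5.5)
The paper gives no proof of this lemma; it quotes it from \cite{BCN89}. Judged on its own, your case $\beta=1$ is fine, apart from a small slip: $n<50$ does not force $d\le 7$, since $(41,8,3,1)$ has $d=8$. That parameter set is excluded anyway, because its eigenvalues $1\pm2\sqrt2$ are irrational and it is not a conference graph.

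The genuine gap is in the case $\beta\ge2$, at your key claim that $\Delta=G(x)$ is strongly regular, and you have put the difficulty in the wrong place. Connectivity and diameter $2$ of $\Delta$ are immediate: two non-adjacent vertices of $\Delta$ have the other $\beta-1\ge1$ members of their $\mu$-clique as common neighbours inside $\Delta$. What does not follow is that two \emph{adjacent} vertices $u,w$ of $\Delta$ have a constant number of common neighbours in $\Delta$, i.e.\ that every triangle $xuw$ lies in the same number of $K_4$'s. For $\Delta$ itself this fails whenever its twin classes (vertices with equal closed neighbourhoods in $\Delta$) have size $s>1$. Let $\overline{\Delta}$ have valency $\bar d$ and $\lambda$-parameter $\bar\alpha$. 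Two vertices in the same class have $\alpha-1=s\bar d+s-2$ common neighbours in $\Delta$. Adjacent vertices in different classes have $2(s-1)+s\bar\alpha$. Equality would force $\bar d=\bar\alpha+1$, so adjacent vertices of $\overline{\Delta}$ would have equal closed neighbourhoods, contradicting reducedness. Nothing in the hypotheses excludes $s>1$ in advance. So your induction hypothesis, which is a statement about strongly regular graphs, cannot be applied to $G(x)$, and your fallback of inducting over amply regular Terwilliger graphs fails for the same reason.

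The missing ingredient is Terwilliger's theorem, stated in the paper as Theorem \ref{thm:1.16.3}. It is the \emph{reduced} graph $\overline{G(x)}$ that is a strongly regular Terwilliger graph, with $\bar n=d/s$, $\bar\beta=(\beta-1)/s<\beta$ and $s\le\bar\alpha+1$. Establishing the constancy of $\bar\alpha$ is the real content there, not a quick count. If you run your induction on $\overline{G(x)}$ instead, the rest of your outline survives, because the factor $s$ cancels: $\bar n<50\bar\beta$ if and only if $d<50(\beta-1)$.
\begin{itemize}
\item If $d<50(\beta-1)$, induction gives that $\overline{G(x)}$ is the pentagon or the Petersen graph. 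Then $\bar\alpha=0$, so $s=1$, and $G$ is locally pentagon or locally Petersen, which you correctly exclude.
\item If $d\ge50(\beta-1)$, add $d-\alpha-1\ge\alpha+2-\beta$ (Theorem \ref{lem:d-lowbdd}) to $d-\alpha-1$ to get $d-\alpha-1\ge(d+1-\beta)/2$. Then $n\ge 1+d+d(d+1-\beta)/(2\beta)\ge 1+50(\beta-1)+1225(\beta-1)^2/\beta>50\beta$ for $\beta\ge2$.
\end{itemize}
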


\begin{theorem}[{\cite[Theorem 1.16.3]{BCN89}}]\label{thm:1.16.3}
Let $G$ be an amply regular Terwilliger graph with parameters $(n,d,\alpha,\beta)$, $\beta > 1$. Then, for any $\gamma \in G$, the reduced graph $\overline{G(\gamma)}$ of $G(\gamma)$ is a strongly regular Terwilliger graph with parameters
\begin{align*}
\bar{n} &= \frac{d}{s}, \quad \bar{d} = \frac{\alpha - s + 1}{s}, \quad \bar{\beta} = \frac{\beta - 1}{s}, \\
\bar{\alpha} &= \frac{(\alpha - s + 1)(\alpha - 2s + 1) - (\beta - 1)(d - \alpha - 1)}{s(\alpha - s + 1)},
\end{align*}
where $s$ is the size of the equivalence classes of $G(\gamma)$. Moreover, 
\begin{gather*}
\beta = s + 1 \text{ or } \beta \geq s^{2} + s + 1, \\
s \mid \gcd(d, \alpha + 1, \beta - 1), \quad \alpha - s + 1 \mid (\beta - 1)(d - s), \quad s \leq \bar{\alpha} + 1.
\end{gather*}
\end{theorem}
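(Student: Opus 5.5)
The plan is to work entirely inside the local graph $\Delta:=G(\gamma)$. First, record its basic properties. $\Delta$ has $d$ vertices and is $\alpha$-regular, because a neighbour $x$ of $\gamma$ has exactly $\alpha$ neighbours in $\Delta$, namely the common neighbours of $x$ and $\gamma$.

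Now take $x,z\in\Delta$ nonadjacent. Then $d(x,z)=2$ in $G$. Their $G$-common neighbours form a $\beta$-clique containing $\gamma$, so inside $\Delta$ they have exactly $\beta-1\ge 1$ common neighbours, and these form a clique. Consequently:
\begin{enumerate}[(i)]
\item $\Delta$ is connected of diameter at most $2$;
\item $\Delta$ contains no induced quadrangle;
\item all $\mu$-graphs of $\Delta$ are cliques of size $\beta-1$.
\end{enumerate}
Since $d>\alpha+1$ (as $G$ is non-complete and amply regular), $\Delta$ is not complete, so it has diameter exactly $2$.

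The next step is to pass to the reduced graph $\overline{\Delta}$. The relation $\equiv$ is defined by $x^\perp=y^\perp$ in $\Delta$, so each class is a clique. Two distinct adjacent classes are completely joined, and nonadjacent classes have no edges between them. I will show two things:
\begin{enumerate}[(a)]
\item every $\mu$-graph of $\Delta$ is a union of equivalence classes;
\item all classes have a common size $s$.
\end{enumerate}
For (a): if $y$ is a common neighbour of nonadjacent $x,z$ and $y'\equiv y$, then $y'$ is also adjacent to $x$ and $z$.

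For (b): if $x\sim y$ with $x\not\equiv y$, pick $z$ adjacent to exactly one of them. Then the $\mu$-clique of the nonadjacent pair through that vertex contains a full class. Comparing the two $\mu$-cliques obtained by swapping roles, together with the edge count
\[(d-\alpha-1)(\beta-1)=\sum_{y\in\Delta(x)}\bigl(\alpha-1-\lambda_\Delta(x,y)\bigr),\]
should force $|\overline{x}|=|\overline{y}|$. Connectivity of $\Delta$ then makes the class size constant.

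Next I show that $\overline{\Delta}$ is strongly regular. Given constant $s$, the graph $\overline{\Delta}$ has $\bar n=d/s$ vertices. It is regular of degree $\bar d=(\alpha-(s-1))/s$, since $x$ has $s-1$ neighbours in its own class and the remaining $\alpha-s+1$ neighbours split into whole classes. Nonadjacent classes have $\bar\beta=(\beta-1)/s$ common neighbours by (a). It also has no induced quadrangle and all its $\mu$-graphs are cliques, so it is Terwilliger.

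The remaining issue is that the number $\bar\alpha$ of common neighbours of two adjacent classes must be constant. I expect this to be the main obstacle, because in $\Delta$ itself $\lambda$ need not be constant on edges. My first attempt is to compute $\bar\lambda(\overline{x},\overline{y})$ for an edge by counting, from $\overline{x}$, the paths of length $2$ into the $\bar n-\bar d-1$ nonneighbour classes. That count yields
\[\sum_{\overline{y}\sim\overline{x}}\bigl(\bar d-1-\bar\lambda(\overline{x},\overline{y})\bigr)=(\bar n-\bar d-1)\,\bar\beta .\]
I will combine this with the quadrangle-freeness, applied to a nonneighbour $\overline{z}$ of $\overline{x}$ adjacent to $\overline{y}$. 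The aim is to show that $\bar\lambda$ takes a single value, so that the average forced by the sum becomes the value on every edge.

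Once $\overline{\Delta}$ is strongly regular, $\bar\alpha$ follows from Theorem~\ref{lem:param_relation}:
\[\bar\alpha=\bar d-1-\frac{\bar\beta(\bar n-\bar d-1)}{\bar d},\qquad \bar n-\bar d-1=\frac{d-\alpha-1}{s}.\]
Substituting the values of $\bar n,\bar d,\bar\beta$ gives exactly the stated formula for $\bar\alpha$.

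The arithmetic conclusions then follow. Integrality of $\bar n,\bar d,\bar\beta$ gives $s\mid\gcd(d,\alpha+1,\beta-1)$. Integrality of $\bar\alpha$ gives
\[\alpha-s+1 \mid (\beta-1)(d-\alpha-1).\]
Since $\alpha+1\equiv s \pmod{\alpha-s+1}$, this is equivalent to $\alpha-s+1\mid(\beta-1)(d-s)$.

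For $s\le\bar\alpha+1$, take adjacent classes $\overline{x},\overline{y}$. In $\Delta$, the vertex $x$ has $\lambda_\Delta(x,y)\ge s-1+s-1$ common neighbours with $y$ counted from the two classes. I will compare this with a local count at a vertex of $\overline{y}$: the class $\overline{x}$ minus one vertex, together with $\gamma$, lies in its neighbourhood. The comparison should give $s-1\le\bar\alpha$.

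Finally, for the dichotomy on $\beta$: either $\bar\beta=1$, giving $\beta=s+1$, or $\bar\beta\ge2$. In the second case I plan to use that the $\mu$-clique of size $\beta-1$ in $\Delta$ is a union of $\bar\beta$ classes. Each vertex of it, together with a nonadjacent vertex, produces further $\mu$-cliques of size $\beta-1$, and counting these within the local structure should show $\bar\beta\ge s+1$. This gives $\beta-1\ge s(s+1)$, that is, $\beta\ge s^2+s+1$. If that counting resists, I will instead apply the whole argument recursively to the Terwilliger graph $\overline{\Delta}$ with $\bar\beta>1$, using the relation $s\le\bar\alpha+1$.
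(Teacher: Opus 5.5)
The paper does not prove this statement. It quotes it from \cite[Theorem 1.16.3]{BCN89}, and its remark quotes the equal-class-size fact from \cite[Proposition 1.16.2]{BCN89}. So your proposal has to stand on its own, and as written it does not: there is a genuine gap at the central steps.

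\textbf{What is correct.} The routine parts hold up.
\begin{itemize}
\item $\Delta=G(\gamma)$ is $\alpha$-regular on $d$ vertices, non-complete, and of diameter $2$.
\item Nonadjacent vertices of $\Delta$ have exactly $\beta-1$ common neighbours, and these form a clique.
\item The $\mu$-graphs of $\Delta$ are unions of classes.
\item Once a common class size $s$ and strong regularity of $\overline{\Delta}$ are known, your values of $\bar n,\bar d,\bar\beta$ are right. So is the derivation of $\bar\alpha$ from $\bar n-\bar d-1=(d-\alpha-1)/s$, and so are the divisibility conditions, including $(\beta-1)(d-\alpha-1)\equiv(\beta-1)(d-s)$ modulo $\alpha-s+1$.
\end{itemize}

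\textbf{Where the gap is.} The substantive steps are only announced.
\begin{itemize}
\item For (b), the identity $(d-\alpha-1)(\beta-1)=\sum_{y\in\Delta(x)}(\alpha-1-\lambda_\Delta(x,y))$ is an averaged count and cannot see class sizes. Comparing $C(x,z)\supseteq\overline{y}$ with $C(y,w)\supseteq\overline{x}$ only gives $|\overline{x}|,|\overline{y}|\le\beta-1$, not equality.
\item For strong regularity, which you yourself call the main obstacle, your only quantitative input is $\sum_{\overline{y}\sim\overline{x}}(\bar d-1-\bar\lambda(\overline{x},\overline{y}))=(\bar n-\bar d-1)\bar\beta$. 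This fixes the \emph{mean} of $\bar\lambda$ over the edges at a vertex, not its value on each edge. The phrase \emph{combine with quadrangle-freeness} names no mechanism that turns the average into a pointwise statement.
\end{itemize}
So the main assertion of the theorem, that $\overline{G(\gamma)}$ is strongly regular, is not established.

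\textbf{The missing idea for the last two inequalities.} These need something you never use because you stay inside $\Delta$: how classes in \emph{different} local graphs fit together.
\begin{itemize}
\item Suppose $x\equiv x'$ in $G(\gamma)$. Then the $\alpha$ common neighbours of $x,x'$ in $G$ are exactly $\gamma$ and the $\alpha-1$ vertices of $\Delta(x)\setminus\{x'\}$. Hence $G(x)\cap G(x')\subseteq\gamma^\perp$, and so $\gamma\equiv x'$ in $G(x)$.
\item Consequently the cliques $\{\gamma\}\cup\overline{x}$ are \emph{lines} of size $s+1$ intrinsic to $G$. Every edge lies on exactly one line, and the class size is the same in adjacent local graphs.
\item Every $\mu$-graph $C$ of $G$ is closed under lines. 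For $u\in C$, the set $C\setminus\{u\}$ is a $\mu$-graph of $G(u)$, hence a union of $G(u)$-classes.
\item So $C$ carries a $2$-$(\beta,s+1,1)$ design with $\bar\beta=(\beta-1)/s$ lines per point. Fisher's inequality then gives $\beta=s+1$ or $\bar\beta\ge s+1$, i.e.\ $\beta\ge s^2+s+1$.
\end{itemize}
Your fallback of recursing on $\overline{\Delta}$ cannot replace this. It would bound $\bar\beta$ in terms of the class size of the local graphs of $\overline{\Delta}$, which has nothing to do with $s$.

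For $s\le\bar\alpha+1$, take adjacent classes $\overline{x}\ne\overline{y}$ of $\Delta$. The $s$ lines through $x$ that meet $\overline{y}$ are distinct and lie inside $G(\gamma)$. Their $s(s-1)$ further points fall into classes adjacent to both $\overline{x}$ and $\overline{y}$. This is what gives $s\bar\alpha\ge s(s-1)$. Your \emph{local count at a vertex of $\overline{y}$} does not produce these points.
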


\begin{remark}
    Let $G$ be a connected regular Terwilliger graph of diameter 2. By \cite[Proposition 1.16.2]{BCN89}, all equivalence classes $\bar{x}$ ($x\in G$) have the same cardinality. Thus, the parameter $s$ in Theorem \ref{thm:1.16.3} is well-defined.
\end{remark}

\section{Amply regular Terwilliger graphs with positive curvature}\label{section3}
In this section, we prove Theorem \ref{ARTG}. We first show three preparatory lemmas. Let $G=(V,E)$ be a graph and $xy\in E$ be an edge $G$. 
Recall from Section 2 that $N_{xy}$ denotes the set of common neighbors of $x$ and $y$.
Let
$$A^{(xy)}_{x}:=\{ w\in V\ |\ w\ne y,\ wx\in E,\ wy\notin E \},$$
    and
$$A^{(xy)}_{y}:=\{ w\in V\ |\ w\ne x,\ wy\in E,\ wx\notin E\}.$$
For a fixed edge $xy$, we simply write $A_x$ for $A^{(xy)}_{x}$ and write $A_y$ for $A^{(xy)}_{y}$ when there is no risk of confusion.

\begin{lemma}\label{ARTG_up}
    Let $G=(V,E)$ be an amply regular Terwilliger graph with parameters $(n, d, \alpha, \beta)$.
    For any edge $xy\in E$, the Lin--Lu--Yau curvature of $xy$ satisfies
    $$\kappa(x,y) \le \frac{2\alpha +3-d}{d}.$$
\end{lemma}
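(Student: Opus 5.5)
The plan is to bound the Wasserstein distance from below using Kantorovich duality. For any $1$-Lipschitz function $f:V\to\mathbb{R}$,
$$W_1(\mu_x^p,\mu_y^p)\ \ge\ \sum_{v} f(v)\mu_y^p(v)-\sum_v f(v)\mu_x^p(v).$$
A good test function $f$ then gives an upper bound on $\kappa_p(x,y)$. Dividing by $1-p$ and letting $p\to 1$ gives the bound on $\kappa(x,y)$. The test function will be built on the $1$-ball around the edge $xy$ and then extended.

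\textbf{Step 1: the test function.} Fix an edge $xy$ and write $V=\{x,y\}\cup N_{xy}\cup A_x\cup A_y\cup R$, where $R$ is everything else. On $x^\perp\cup y^\perp$ define
$$f(x)=0,\quad f(y)=1,\quad f|_{N_{xy}}=0,\quad f|_{A_x}=-1,\quad f|_{A_y}=1.$$

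\textbf{Step 2: $f$ is $1$-Lipschitz on $x^\perp\cup y^\perp$.} Most pairs are checked directly.
\begin{itemize}
\item A vertex $w\in A_x$ is at distance $2$ from $y$, and $|f(w)-f(y)|=2$.
\item A vertex $u\in A_y$ is at distance $2$ from $x$, and $|f(u)-f(x)|=1$.
\item Every other pair not involving both $A_x$ and $A_y$ has values differing by at most $1$.
\end{itemize}
The only pairs that need care are $w\in A_x$, $u\in A_y$, whose values differ by $2$. We need $wu\notin E$, and this is where the Terwilliger property is used. Suppose $wu\in E$. Then $x$ and $u$ are at distance $2$, and both $y$ and $w$ are common neighbours of $x$ and $u$. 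Since the common neighbourhood of $x$ and $u$ induces a clique, $w$ would be adjacent to $y$, contradicting $w\in A_x$. Hence $d(w,u)\ge 2$, and $f$ is $1$-Lipschitz on $x^\perp\cup y^\perp$.

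By the McShane extension $f(v)=\min_{z\in x^\perp\cup y^\perp}\bigl(f(z)+d(v,z)\bigr)$, $f$ extends to a $1$-Lipschitz function on all of $V$ without changing its values on $x^\perp\cup y^\perp$. This is all that matters, because both measures are supported there.

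\textbf{Step 3: the computation.} Amply regularity gives $|N_{xy}|=\alpha$ and $|A_x|=|A_y|=d-\alpha-1$. Plugging $f$ into the duality bound, the contributions of $N_{xy}$ are $0$, and we get
$$W_1(\mu_x^p,\mu_y^p)\ \ge\ p+\frac{1-p}{d}\bigl(|A_y|+|A_x|-1\bigr)=p+\frac{1-p}{d}\bigl(2d-2\alpha-3\bigr).$$
Since $d(x,y)=1$, this gives
$$\kappa_p(x,y)\le (1-p)\left(1-\frac{2d-2\alpha-3}{d}\right)=(1-p)\,\frac{2\alpha+3-d}{d}.$$
Dividing by $1-p$ and letting $p\to1$ gives $\kappa(x,y)\le (2\alpha+3-d)/d$.

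The only non-routine point is the absence of edges between $A_x$ and $A_y$ in Step 2; everything else is bookkeeping.
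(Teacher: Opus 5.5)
Your proof is correct. Its combinatorial core is the same as the paper's: the Terwilliger property rules out edges between $A_x$ and $A_y$. Your clique-of-common-neighbours argument is the paper's induced-quadrangle argument in different words.

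The difference lies in how this fact becomes a lower bound on $W_1(\mu_x^p,\mu_y^p)$.
\begin{itemize}
\item The paper works on the primal side. It invokes Lemma~\ref{simple} to get, for $p\ge\frac{1}{1+d}$, a simple optimal plan with $\pi(x,y)=p-\frac{1-p}{d}$. It then notes that all remaining mass must travel from $A_x$ to $A_y$, and charges that mass distance at least $2$.
\item You work on the dual side. You exhibit an explicit $1$-Lipschitz witness and use only weak Kantorovich duality, which is a one-line computation valid for any transport plan.
\end{itemize}

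What your route buys:
\begin{itemize}
\item It is self-contained: no structural result on optimal plans is needed.
\item It gives $\kappa_p(x,y)\le(1-p)\frac{2\alpha+3-d}{d}$ for every $p\in[0,1]$, not only for $p\ge\frac{1}{1+d}$.
\item The McShane extension is not needed. Any plan $\pi$ from $\mu_x^p$ to $\mu_y^p$ charges only pairs $(a,b)$ with $a\in x^\perp$ and $b\in y^\perp$. So weak duality only uses $f(b)-f(a)\le d(a,b)$ on those pairs.
\end{itemize}

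What the paper's route buys is uniformity with the rest of Section~\ref{section3}.
\begin{itemize}
\item Lemma~\ref{line} refines the same simple plan by charging distance $3$ to the mass arriving at $z$.
\item Lemma~\ref{3.3} proves sharpness by exhibiting a plan that matches $A_x$ to $A_y$ at distance exactly $2$.
\end{itemize}
In the dual picture, that sharpness is the complementary-slackness partner of your witness. Your method also handles Lemma~\ref{line}: keep your $f$ but set $f(z)=2$, using $d(z,x)=2$, $d(z,N_{xy})\ge 2$ and $d(z,A_x)\ge 3$. This gives $W_1(\mu_x^p,\mu_y^p)\ge 1$ directly.
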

\begin{proof}
For any edge $xy\in E$, we have  $|A_x|=|A_y|=d-\alpha -1$.
    For any $p\in \left[ \frac{1}{1+d},1 \right]$,
    by Lemma \ref{simple}, there is a simple optimal transport plan $\pi$ from $\mu_x^p$ to $\mu_y^p$ such that $$\pi(x,y)=p-\frac{1-p}{d}.$$ 
    For any two vertices $w_1\in A_x$ and $w_2\in A_y$, if $w_1$ is adjacent to $w_2$, then $xyw_2w_1x$ forms an induced quadrangle, which is contradictory to the assumption that $G$ is a Terwilliger graph. Thus, for any two vertices $w_1\in A_x$ and $w_2\in A_y$, we have $d(w_1,w_2)\ge 2$.
    By the choice of $\pi$, we observe that, for any two vertices $w_1$ and $w_2$ in $G$, if $\pi(w_1,w_2)>0$, then either ``$w_1=x$ and $w_2=y$'' or ``$w_1\in A_x$ and $w_2\in A_y$''.
    We calculate that
    \begin{align*}
        W_1(\mu_x^p,\mu_y^p)&=\sum_{w_1\in V}\sum_{w_2\in V}d(w_1,w_2)\pi(w_1,w_2)\\
        &=p-\frac{1-p}{d}+\sum_{w_1\in A_x}\sum_{w_2\in A_y} d(w_1,w_2)\pi(w_1,w_2)\\
        &\ge p-\frac{1-p}{d}+2\sum_{w_1\in A_x}\sum_{w_2\in A_y}\pi(w_1,w_2)\\
        &= p-\frac{1-p}{d}+ \frac{2(d-\alpha-1)(1-p)}{d}.
    \end{align*}
    By the definition of the Lin--Lu--Yau curvature, we derive
    \begin{align*}
        \kappa(x,y)=\lim_{p\to 1}\frac{1-W_1(\mu_x^p,\mu_y^p)}{1-p} \le \frac{2\alpha +3-d}{d}.
    \end{align*}
    This completes the proof.
\end{proof}

Let $H$ be a $d$-regular graph on $n$ vertices of girth at least $5$. It is direct to check that the line graph $G$ of $H$ is amply regular with parameters $(\frac{nd}{2},2d-2,d-2,1)$. Hence, $G$ is an amply regular Terwilliger graph by Lemma \ref{beta=1}. Next, we show that for such a graph $G$, the curvature upper bound in Lemma \ref{ARTG_up} can be improved for at least one edge if the diameter of $H$ is at least $3$.

\begin{lemma}\label{line}
    Let $H$ be a regular graph of girth at least $5$ with diameter at least 3, and $G$ be its line graph. Then there exists an edge of $G$ with non-positive Lin--Lu--Yau curvature.
\end{lemma}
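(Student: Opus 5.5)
Write $k$ for the degree of $H$. Then $G$ is $d$-regular with $d=2k-2$ and $\alpha=k-2$, so the bound of Lemma \ref{ARTG_up} reads $\kappa\le 1/d$. The plan is to sharpen the estimate in the proof of Lemma \ref{ARTG_up} for a well-chosen edge. Along the way we track which pairs in $A_x\times A_y$ are at distance exactly $2$ and which are at distance at least $3$.

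\emph{Local structure of an edge of $G$.} Fix an edge of $G$, i.e.\ two edges $x=ab$ and $y=bc$ of $H$ sharing the vertex $b$, with $a\ne c$. Since $H$ has girth at least $5$, $a$ and $c$ are not adjacent. The common neighbours $N_{xy}$ are the $k-2$ other edges at $b$. The set $A_x$ consists of the $k-1$ edges $aa'$ with $a'\in N_H(a)\setminus\{b\}$, and $A_y$ consists of the $k-1$ edges $cc'$ with $c'\in N_H(c)\setminus\{b\}$. We now classify the pairs $(aa',cc')$.
\begin{itemize}
\item The two edges cannot share a vertex. This would force $a'=c'$, giving the $4$-cycle $a\,b\,c\,a'$, which girth at least $5$ forbids.
\item Their distance in $G$ is exactly $2$ iff $a'c'\in E(H)$. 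The only other way to get distance $2$ would be $ac\in E(H)$ or a shared endpoint, both already excluded.
\item Otherwise their distance is at least $3$.
\end{itemize}
Moreover, each $a'$ is adjacent to at most one $c'\in N_H(c)\setminus\{b\}$. Two such neighbours $c_1',c_2'$ would give the $4$-cycle $a'\,c_1'\,c\,c_2'$. Symmetrically, each $c'$ is adjacent to at most one such $a'$. So the distance-$2$ pairs form a partial matching between $A_x$ and $A_y$; let $m$ be its size.

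\emph{Refined transport estimate.} Take the simple optimal plan $\pi$ of Lemma \ref{simple}, exactly as in the proof of Lemma \ref{ARTG_up}. Its mass outside $(x,y)$ lives on $A_x\times A_y$, and each vertex there carries total mass $(1-p)/d$. By the matching property, the total mass $\pi$ places on distance-$2$ pairs is at most $m(1-p)/d$. Every remaining pair costs at least $3$. Hence
\[
W_1(\mu_x^p,\mu_y^p)\ \ge\ p-\frac{1-p}{d}+\frac{(1-p)}{d}\bigl(2(k-1)+(k-1-m)\bigr).
\]
Dividing by $1-p$ and letting $p\to1$ gives $\kappa(x,y)\le \frac{1-(k-1-m)}{d}$. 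So it suffices to find an edge with $m\le k-2$, i.e.\ some $a'$ having no neighbour in $N_H(c)\setminus\{b\}$.

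\emph{Choosing the edge via diameter $\ge3$.} Pick $x_0,x_3\in V(H)$ with $d_H(x_0,x_3)=3$ and a shortest path $x_0x_1x_2x_3$. Set $a=x_2$, $b=x_1$, $c=x_0$, and take $a'=x_3\in N_H(a)\setminus\{b\}$. If $x_3$ were adjacent to some $c'\in N_H(x_0)\setminus\{x_1\}$, then $d_H(x_0,x_3)\le 2$, a contradiction. Thus $m\le k-2$, and the edge $\{x_1x_2,x_0x_1\}$ of $G$ has $\kappa\le 0$.

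The main point requiring care is the lower bound on $W_1$. One must check that the matching structure really caps the mass on cost-$2$ pairs at $m(1-p)/d$. This holds because each vertex of $A_x$ has at most one cost-$2$ partner and sends total mass $(1-p)/d$. It also uses that $\pi$ is supported only on $(x,y)$ and $A_x\times A_y$, exactly as in Lemma \ref{ARTG_up}.
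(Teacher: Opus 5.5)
Your proof is correct and is essentially the paper's argument. After the relabeling $x_i\mapsto u_{i+1}$, your edge $\{x_1x_2,\,x_0x_1\}$ and special vertex $x_2x_3$ (at distance $\ge 3$ from all of $A_y$) are exactly the paper's edge $\{u_1u_2,u_2u_3\}$ and vertex $z=u_3u_4$, and your matching bound $\kappa\le (m-k+2)/d$ with $m\le k-2$ just repackages the paper's count of one cost-$3$ vertex with all remaining mass at cost at least $2$.

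One small fix: when you characterize the distance-$2$ pairs you must also rule out $ac'\in E(H)$ and $a'c\in E(H)$. These give the $4$-cycles $a\,b\,c\,c'$ and $a'\,a\,b\,c$, so girth $\ge 5$ excludes them; the first is precisely the case the paper handles with the cycle $u_1u_2u_3u_5$.
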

\begin{proof}
    Let $u_1$ and $u_4$ be two vertices at distance 3 in $H$, and let $u_1u_2u_3u_4$ be a path of length 3 connecting $u_1$ and $u_4$.
    Let $x,y,z$ be the vertices in $G$ corresponding to the edges $u_1u_2,u_2u_3,u_3u_4$ in $H$, respectively.
    \iffalse
    Let $V(G)$ the vertex set of $G$, and let $E(G)$ the edge set of $G$.
    Set 
    $$N_{xy}:=\{ w\in V(G)\ |\ wx\in E(G),\ wy\in E(G) \},$$ 
    $$A_{x}:=\{ w\in V(G)\ |\ w\ne y,\ wx\in E(G),\ wy\notin E(G) \},$$
    and
    $$A_{y}:=\{ w\in V(G)\ |\ w\ne x,\ wy\in E(G),\ wx\notin E(G) \}.$$
    \fi
    \leavevmode
    \iffalse
    \begin{claim}[1]
        For any two vertices $w_1\in A_x$ and $w_2\in A_y$, we have $d(w_1,w_2)\ge 2$.
    \end{claim}
    \begin{proof}
        Let $u_1u_5$ and $u_3u_6$ be the two edges in $H$ corresponding to $w_1$ and $w_2$, respectively.
        Since $A_x$ and $A_y$ are disjoint, we have $w_1\ne w_2$.
        If $d(w_1,w_2)=1$, then $u_5= u_6$.
        Note that $u_1u_2u_3u_5u_1$ is a cycle of length 4, contradicting the assumption that $H$ has girth at least 5.
        Therefore, we have $d(w_1,w_2)\ge 2$.
    \end{proof}
\fi

        We claim that, for any vertex $w\in A_x$, we have $d(w,z)\ge 3$.

        Let $u_1u_5$ be the edge in $H$ corresponding to the vertex $w$ in $G$.
        Let $E(H)$ be the edge set of $H$. Since $d(u_1,u_4)=3$, we have $u_1u_3\notin E(H)$ and $u_1u_4\notin E(H)$, and hence $u_5\notin \{u_3,u_4 \}$.
        Thus, $d(w,z)\ge 2$.

        Since $d(u_1,u_4)=3$, we have $u_5u_4\notin E(H)$.
        If $d(w,z)=2$, then $u_5u_3\in E(H)$.
        Note that $u_1u_2u_3u_5u_1$ is a cycle of length 4, contradicting the assumption that $H$ has girth at least 5.
        Thus, we have $d(w,z)\ge 3$, as claimed.

    Suppose that $H$ is $d$-regular. Then $G$ is $(2d-2)$-regular.
    For any $p\in \left[ \frac{1}{2d-1},1 \right]$,
    by Lemma \ref{simple}, there is a simple optimal transport plan $\pi$ from $\mu_x^p$ to $\mu_y^p$ such that $$\pi(x,y)=p-\frac{1-p}{2d-2}.$$ 
    Notice that $d(w_1,w_2)\geq 2$ for any $w_1\in A_x$ and $w_2\in A_y$ since $G$ is Terwilliger. 
    %By the choice of $\pi$, we observe that, for any two vertices $w_1$ and $w_2$ in $G$, if $\pi(w_1,w_2)>0$, then either ``$w_1=x$ and $w_2=y$'' or ``$w_1\in A_x$ and $w_2\in A_y$''.
    Then, it follows by the above Claim that
    \begin{align*}
        W_1(\mu_x^p,\mu_y^p)&=\sum_{w_1\in V}\sum_{w_2\in V}d(w_1,w_2)\pi(w_1,w_2)\\
        &=p-\frac{1-p}{2d-2}+\sum_{w_1\in A_x}d(w_1,z)\pi(w_1,z)+\sum_{w_1\in A_x}\sum_{w_2\in A_y\backslash \{z\}}d(w_1,w_2)\pi(w_1,w_2)\\
        &\ge p-\frac{1-p}{2d-2}+3\sum_{w_1\in A_x}\pi(w_1,z)+2\sum_{w_1\in A_x}\sum_{w_2\in A_y\backslash \{z\}}\pi(w_1,w_2)\\
        &= p-\frac{1-p}{2d-2}+\frac{3(1-p)}{2d-2} + \frac{2(|A_y|-1)(1-p)}{2d-2}.
    \end{align*}
    Since $|N_{xy}|=d-2$, we have $|A_y|=2d-2-|N_{xy}|-1=d-1$.
    Thus,
    \begin{align*}
        W_1(\mu_x^p,\mu_y^p)
        \ge \left(p-\frac{1-p}{2d-2}\right)+\frac{3(1-p)}{2d-2} + \frac{2(d-2)(1-p)}{2d-2}=1.
    \end{align*}
    It follows that
    \begin{align*}
        \kappa(x,y)=\lim_{p\to 1}\frac{1-W_1(\mu_x^p,\mu_y^p)}{1-p} \le 0.
    \end{align*}
    This completes the proof.
\end{proof}

Let $H$ be a $d$-regular graph on $n$ vertices with girth at least $5$. If the diameter of $H$ equals $2$, then $H$ is a strongly regular graph with parameters $(n,d,0,1)$. Next, we show for such a graph $H$, the curvature upper bound of its line graph $G$ in Lemma \ref{ARTG_up} is achieved.

\begin{lemma}\label{3.3}
    Let $G=(V,E)$ be the line graph of a strongly regular graph $H$ with parameters $(n,d,0,1)$. Then for any edge $xy$ in $G$, we have
    $$\kappa(x,y)=\frac{1}{2d-2}.$$
\end{lemma}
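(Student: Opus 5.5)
Lemma \ref{ARTG_up} already gives the upper bound. $G$ is amply regular with parameters $(\frac{nd}{2},2d-2,d-2,1)$ and is Terwilliger by Lemma \ref{beta=1}. So
$$\kappa(x,y)\le \frac{2(d-2)+3-(2d-2)}{2d-2}=\frac{1}{2d-2}.$$
It remains to prove the matching lower bound. For that we exhibit, for $p$ close to $1$, a transport plan from $\mu_x^p$ to $\mu_y^p$ of cost at most $1-\frac{1-p}{2d-2}$. Then $\kappa_p(x,y)\ge \frac{1-p}{2d-2}$, and letting $p\to1$ gives the claim.

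\textbf{Setup.} Let $x,y$ correspond to the edges $u_1u_2$ and $u_2u_3$ of $H$. Then:
\begin{itemize}
\item $N_{xy}$ consists of the $d-2$ other edges at $u_2$;
\item $A_x$ consists of the $d-1$ edges $u_1a$ with $a\ne u_2$;
\item $A_y$ consists of the $d-1$ edges $u_3b$ with $b\ne u_2$.
\end{itemize}
Since $H$ has parameters $(n,d,0,1)$, it has diameter $2$ and girth $5$. The vertices $u_1,u_3$ are at distance $2$ with unique common neighbour $u_2$. So each $a\in N(u_1)\setminus\{u_2\}$ has no edge to $u_3$ and no edge to $u_2$ (triangle-free). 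By diameter $2$, $a$ and $u_3$ have exactly one common neighbour $b=\sigma(a)$, which lies in $N(u_3)\setminus\{u_2\}$ because $a\not\sim u_2$. By symmetry, $\sigma$ is a bijection from $N(u_1)\setminus\{u_2\}$ to $N(u_3)\setminus\{u_2\}$. Moreover $a\sim \sigma(a)$, so the line-graph vertices $u_1a$ and $u_3\sigma(a)$ are joined by the path $u_1a \to a\sigma(a)\to u_3\sigma(a)$ and lie at distance $2$. This is the matching that makes every $A_x$-vertex cost exactly $2$, with nothing better available.

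\textbf{The plan.} Take the simple plan of Lemma \ref{simple}. Mass on $N_{xy}$ stays put, $\pi(x,y)=p-\frac{1-p}{2d-2}$, and each $u_1a\in A_x$ sends its mass $\frac{1-p}{2d-2}$ to $u_3\sigma(a)$ at cost $2$. The plan has $d-1$ such pieces, so the total cost is
$$p-\frac{1-p}{2d-2}+\frac{2(d-1)(1-p)}{2d-2}=p+(1-p)\frac{2d-3}{2d-2}=1-\frac{1-p}{2d-2},$$
exactly the required bound. Hence $\kappa_p(x,y)\ge \frac{1-p}{2d-2}$, and dividing by $1-p$ and letting $p\to1$ yields $\kappa(x,y)\ge\frac1{2d-2}$.

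\textbf{Main obstacle.} The only nontrivial step is constructing the bijection $\sigma$, i.e.\ verifying that every vertex of $A_x$ is at distance exactly $2$ from a distinct vertex of $A_y$. This uses the diameter-$2$ and girth-$5$ properties of $H$ in the way described above.
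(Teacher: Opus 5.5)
Your proof is correct and is essentially the paper's argument: the upper bound comes from Lemma \ref{ARTG_up}, and the lower bound comes from the explicit plan that sends each $u_1a\in A_x$ to $u_3\sigma(a)\in A_y$ at cost $2$, where $\sigma(a)$ is the unique common neighbour of $a$ and $u_3$ (the paper's map $\phi$). The differences are cosmetic: the paper gets bijectivity of $\sigma$ from injectivity via the forbidden $4$-cycle $u_1a_1\sigma(a_1)a_2$ rather than ``by symmetry'' (your version is valid because the reverse map inverts $\sigma$ by uniqueness of common neighbours), and since any plan bounds $W_1$ from above, your appeal to Lemma \ref{simple} is unnecessary.
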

\begin{proof}
    Note that $H$ has girth at least 5.
    It is straightforward to check that $G$ is an amply regular graph with parameters $(nd/2,2d-2,d-2,1)$.
    By Lemma \ref{beta=1}, $G$ is a Terwilliger graph.
    It follows by Lemma \ref{ARTG_up} that $\kappa(x,y) \le 1/(2d-2)$.
    Now, it suffices to show that $\kappa(x,y) \ge 1/(2d-2)$.

    For any edge $xy$ of $G$, let $u_1u_2$ and $u_1u_3$ be the edges in $H$ corresponding to $x$ and $y$, respectively.
    Let $V(H)$ be the vertex set of $H$, and let $E(H)$ be the edge set of $H$.
    Set $$A_2:=\{ v\in V(H)\ |\ v\ne u_1,\ u_2 v\in E(H) \}$$
    and
    $$A_3:=\{ v\in V(H)\ |\ v\ne u_1,\ u_3 v\in E(H) \}.$$
    Observe that $A_2\cap A_3=\emptyset$. Indeed, if there is a vertex $v$ in $A_2\cap A_3$, then $u_1u_2vu_3u_1$ forms a cycle of length 4, contradicting the fact that $H$ has girth at least 5. 
    \begin{claim}
        For any vertex $v\in A_2$, there is a unique vertex $\phi (v)\in A_3$, such that $v$ is adjacent to $\phi (v)$. Moreover, for any two distinct vertices $v_1,v_2$ in $A_2$, we have $\phi (v_1)\ne \phi (v_2)$.
    \end{claim}
    \begin{proof}
        For any vertex $v\in A_2$, $A_2\cap A_3=\emptyset$ implies $u_3v\notin E(H)$.
        If $v$ is adjacent to $u_1$, then $u_1u_2v$ forms a triangle, which is a contradiction.
        So, $vu_1\notin E(H)$.
        By the assumption that $H$ is a strongly regular graph $H$ with parameters $(n,d,0,1)$, $u_3$ and $v$ have exactly one common neighbor, which must be in $A_3$.
        Denote it by $\phi(v)$.

        If there are two distinct vertices $v_1,v_2$ in $A_2$ such that $\phi (v_1)= \phi (v_2)$, then $u_2v_1\phi (v_1)v_2u_2$ forms a cycle of length 4, a contradiction.
        This completes the proof.
    \end{proof}
    For any $v\in A_2$, let $\psi_2(v)$ be the vertex in $G$ corresponding to the edge $u_2v$ in $H$. 
    For any $v\in A_3$, let $\psi_3(v)$ be the vertex in $G$ corresponding to the edge $u_3v$ in $H$.
    For any $p\in \left[ \frac{1}{2d-1},1 \right]$, let $\pi_p: V\times V\to [0,1]$ be the map defined as follows:
    \begin{center}
    $\pi_p(z,w)=\begin{cases}
    p-\frac{1-p}{2d-2}, &{\rm if}\ z=x, w=y;\\
    \frac{1-p}{2d-2}, &{\rm if}\ z=w,z\in \{x,y \};\\
    \frac{1-p}{2d-2}, &{\rm if}\ z=w,zx\in E, zy\in E;\\
    \frac{1-p}{2d-2}, &{\rm if}\ \exists \ v\in A_2, z=\psi_2(v), w=\psi_3(\phi(v));\\
    0, &{\rm otherwise}.
    \end{cases}$
    \end{center}
    By the Claim, it is straightforward to check that $\pi$ is a transport plan from $\mu_x^p$ to $\mu_y^p$.
    For any $v\in A_2$, since $v$ is adjacent to $\phi(v)$ in $H$, we have $d(\psi_2(v),\psi_3(\phi(v)))=2$.
    Thus, we calculate that
    \begin{align*}
        W_1(\mu_x^p,\mu_y^p)&=\inf_{\pi}\sum_{y\in V}\sum_{x\in V}d(x,y)\pi(x,y)\\
        &\le \sum_{y\in V}\sum_{x\in V}d(x,y)\pi_p(x,y)\\
        &= (p-\frac{1-p}{2d-2})+2|A_2|\times\frac{1-p}{2d-2}\\
        &= (p-\frac{1-p}{2d-2})+2(d-1)\times\frac{1-p}{2d-2}.
    \end{align*}
    It follows that 
    \begin{align*}
        \kappa(x,y)=\lim_{p\to 1}\frac{1-W_1(\mu_x^p,\mu_y^p)}{1-p}\ge \frac{1}{2d-2}.
    \end{align*}
    Recall that $\kappa(x,y) \le 1/(2d-2)$. This completes the proof.
\end{proof}

Now, we are prepared to prove Theorem \ref{ARTG}.
\begin{proof}[Proof of Theorem \ref{ARTG}]
    Let $G=(V,E)$ be an amply regular Terwilliger graph with parameters $(n, d, \alpha, \beta)$ such that $G$ has positive Lin--Lu--Yau curvature.
    By Lemma \ref{ARTG_up}, we have
    \begin{align*}
        0< \min_{xy\in E} \kappa(x,y)\le \frac{2\alpha +3-d}{d}.
    \end{align*}
    Therefore, we have $2\alpha +3-d >0$, which implies that
    \begin{align}\label{dalpha}
        d\le 2\alpha +2
    \end{align}

    The proof now splits into two cases, depending on whether $\beta > 1$.

    \noindent\textbf{Case 1:} $\beta = 1$.
    
    By Theorem \ref{lem:d-lowbdd} (i), we have $d\ge 2\alpha-\beta +3= 2\alpha +2$. Together with inequality \eqref{dalpha}, we find $d= 2\alpha +2$. Then, by Theorem \ref{lem:d-lowbdd} (ii), we know that $G$ is either the icosahedron or the line graph of a regular graph of girth at least $5$.
    Since the icosahedron has $\beta=2$, we conclude that $G$ is the line graph of a regular graph $H$ of girth at least $5$.
    By Lemma \ref{line} and the assumption that $G$ has positive Lin--Lu--Yau curvature, the diameter of $H$ is at most 2.
    Since $H$ has girth at least $5$, if $H$ has diameter 1, then $H$ is the complete graph $K_1$ or $K_2$, and hence $G$ is $K_0$ or $K_1$, which is a contradiction.
    Now, assume that the diameter of $H$ is 2.
    Since $H$ is a regular graph of girth at least $5$, we observe that $H$ is a strongly regular graph with parameters $(*,*,0,1)$.
    According to Lemma \ref{3.3}, $G$ has positive Lin--Lu--Yau curvature.
    Moreover, by Theorem \ref{thm:HS_Moore_srg}, $H$ must be isomorphic to one of the following graphs: the pentagon, the Petersen graph, the Hoffman–Singleton graph, or a strongly regular graph with parameters $(3250,57,0,1)$.

    \noindent\textbf{Case 2:} $\beta > 1$.

    By Theorem \ref{thm:1.16.3}, for any $\gamma \in G$, the reduced graph $\overline{G(\gamma)}$ of $G(\gamma)$ is a strongly regular Terwilliger graph with parameters
    \begin{align*}
    \bar{n} &= \frac{d}{s}, \quad \bar{d} = \frac{\alpha - s + 1}{s}, \quad \bar{\beta} = \frac{\beta - 1}{s}, \\
    \bar{\alpha} &= \frac{(\alpha - s + 1)(\alpha - 2s + 1) - (\beta - 1)(d - \alpha - 1)}{s(\alpha - s + 1)},
    \end{align*}
    where $s$ is the size of the equivalence classes of $G(\gamma)$.
    By inequality \eqref{dalpha}, we have $d\le 2\alpha +2$.
    Observe that
    \begin{align}\label{n1}
        \bar{n}= \frac{d}{s}\le  \frac{2\alpha +2}{s}= 2(\bar{d} +1).
    \end{align}
    Since $\overline{G(\gamma)}$ is strongly regular, Theorem \ref{lem:param_relation} yields
    \begin{align}\label{n2}
         \bar{n} - \bar{d} - 1 = \frac{\bar{d}(\bar{d} - \bar{\alpha} - 1)}{\bar{\beta}}.
    \end{align}
    Combining inequalities \eqref{n1} and \eqref{n2}, we deduce that
    \begin{align*}
        \frac{\bar{d}(\bar{d} - \bar{\alpha} - 1)}{\bar{\beta}}= \bar{n} - \bar{d} - 1\le \bar{d}+ 1.
    \end{align*}
    Thus, we have
    \begin{align*}
        \bar{d} - \bar{\alpha} - 1\le \frac{\bar{\beta}}{\bar{d}} (\bar{d}+ 1)= \bar{\beta}+ \frac{\bar{\beta}}{\bar{d}}\le \bar{\beta}+1,
    \end{align*}
    which implies
    \begin{align}\label{d1}
        \bar{d}\le \bar{\alpha}+ \bar{\beta}+2.
    \end{align}
    Recall from Theorem \ref{lem:d-lowbdd} (i) that
    \begin{align}\label{d2}
        \bar{d}\ge 2\bar{\alpha}- \bar{\beta}+3.
    \end{align}
    Combining inequalities \eqref{d1} and \eqref{d2} leads to
    \begin{align*}
        \bar{\alpha}\le 2\bar{\beta} -1.
    \end{align*}
    Substituting it into inequality \eqref{d1}, we obtain
    \begin{align*}
        \bar{d}\le 3\bar{\beta}+1.
    \end{align*}
    Substituting it into inequality \eqref{n1}, we derive
    \begin{align}
        \bar{n}\le 6\bar{\beta}+4< 50 \bar{\beta}.
    \end{align}
    According to Lemma \ref{cor:1.16.6}, $\overline{G(\gamma)}$ is the pentagon or the Petersen graph.
    If $\overline{G(\gamma)}$ is the Petersen graph, then we have $\bar{n}=10$ and $\bar{d}=3$, violating the inequality \eqref{n1}.
    Thus, $\overline{G(\gamma)}$ must be the pentagon.
    So, $\bar{\alpha}=0$. 
    According to Theorem \ref{thm:1.16.3}, we have 
    $s \leq \bar{\alpha} + 1 =1$.
    Therefore, $s=1$, which implies that $\overline{G(\gamma)}=G(\gamma)$.
    By the arbitrariness of the vertex $\gamma$, $G$ is locally pentagon.
    Note that the only graph that is locally a pentagon is the icosahedron (see, e.g., \cite[Proposition 1.1.4]{BCN89}).
    The proof is done.
\end{proof}

A direct corollary of Theorem \ref{ARTG} is the classification of all amply regular graphs with parameters $(n, d, \alpha, 1)$ and positive Lin--Lu--Yau curvature.

\begin{corollary}\label{mu1}
    Let $G$ be an amply regular graph with parameters $(n, d, \alpha, 1)$. If $G$ has positive Lin--Lu--Yau curvature, then $G$ is isomorphic to one of the following graphs: the pentagon, the line graph of the Petersen graph, the line graph of the Hoffman–Singleton graph, or the line graph of a strongly regular graph with parameters $(3250,57,0,1)$.
\end{corollary}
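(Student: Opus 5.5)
The plan is to deduce Corollary \ref{mu1} directly from Theorem \ref{ARTG}, using Lemma \ref{beta=1} to verify the Terwilliger hypothesis. Let $G$ be amply regular with parameters $(n,d,\alpha,1)$ and positive Lin--Lu--Yau curvature. First, Lemma \ref{beta=1} shows that $G$ is a Terwilliger graph. Hence $G$ satisfies all hypotheses of Theorem \ref{ARTG}. That theorem then places $G$ among five graphs: the pentagon, the icosahedron, the line graph of the Petersen graph, the line graph of the Hoffman--Singleton graph, or the line graph of a strongly regular graph with parameters $(3250,57,0,1)$.

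The remaining step is to discard the icosahedron. The icosahedron has intersection array $\{5,2,1;1,2,5\}$, so its parameter $\beta=c_2=2\neq 1$, and it cannot occur under our hypothesis. To confirm the other four graphs are consistent with $\beta=1$, note the following. The pentagon has parameters $(5,2,0,1)$. For the three line graphs, the underlying graph $H$ is strongly regular with parameters $(*,d_H,0,1)$, and its line graph has parameters $(nd_H/2,\,2d_H-2,\,d_H-2,\,1)$, as observed before Lemma \ref{3.3}. Consistency is not required for the corollary, since it only asserts a necessary condition; still, it is worth remarking that Lemma \ref{3.3} shows these line graphs do have positive curvature.

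There is no real obstacle. The only point needing care is that the case analysis of Theorem \ref{ARTG} already handles $\beta=1$ (Case 1), so the corollary reduces to checking that the icosahedron violates $\beta=1$.
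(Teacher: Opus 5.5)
Your proposal is correct and matches the paper's proof, which likewise obtains the corollary from Lemma \ref{beta=1} and Theorem \ref{ARTG}. Your explicit exclusion of the icosahedron, via $\beta = c_2 = 2 \neq 1$, spells out the one step the paper leaves implicit.
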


\begin{proof}
    The desired classification follows by Lemma \ref{beta=1} and Theorem \ref{ARTG}.
\end{proof}

\section{Classification}\label{section4}
In this section, we prove Theorem \ref{thm3}.
We first present a sharp lower bound for the normalized Laplacian eigenvalue $\lambda_1$ of a strongly regular graph. 

\begin{lemma}\label{thm:srg_eigenvalue}
    Let $G$ be a strongly regular graph with parameters $(n,d,\alpha,\beta)$ which is not the pentagon. Then we have
    $$\lambda_1\ge \frac{2+\alpha}{d},$$
    with equality if and only if $d=2\alpha -\beta +4$. 
\end{lemma}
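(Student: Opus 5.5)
The proof will be a direct computation with the closed form for $\lambda_1$ from Theorem~\ref{lam}. Through the relation $\theta_i=d-d\lambda_i$, I will rewrite the claim $\lambda_1\ge (2+\alpha)/d$ as a bound on the second largest adjacency eigenvalue:
$$\theta_1=\frac{(\alpha-\beta)+\sqrt{(\alpha-\beta)^2+4(d-\beta)}}{2}\le d-\alpha-2=:c.$$
Here $\theta_1$ is the larger root of $q(t)=t^2-(\alpha-\beta)t-(d-\beta)$. So $\theta_1\le c$ holds if and only if both of the following hold:
\begin{itemize}
\item[(a)] $c\ge (\alpha-\beta)/2$, i.e. $c$ lies to the right of the vertex of $q$;
\item[(b)] $q(c)\ge 0$.
\end{itemize}
Moreover, equality $\theta_1=c$ holds exactly when $q(c)=0$, given (a).

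The key step is to factor $q(c)$. Put $e:=d-2\alpha+\beta-4$, so that $d-2\alpha+\beta-2=e+2$. Expanding gives
\begin{align*}
q(c)=(d-\alpha-2)(d-2\alpha+\beta-2)-(d-\beta)=e(d-\alpha-2)+e=(d-2\alpha+\beta-4)(d-\alpha-1).
\end{align*}
Since $G$ is non-complete and connected, $b_1=d-\alpha-1\ge 1$, so the second factor is positive. Hence $q(c)\ge 0$ if and only if $d\ge 2\alpha-\beta+4$. This is precisely where Theorem~\ref{lem:d-lowbdd} enters. Part (i) gives $d\ge 2\alpha-\beta+3$. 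Part (ii) says the equality $d=2\alpha-\beta+3$ forces $G$ to be the pentagon when the diameter is $2$, and the pentagon is excluded by hypothesis. Therefore $d\ge 2\alpha-\beta+4$, so $q(c)\ge0$, with $q(c)=0$ exactly when $d=2\alpha-\beta+4$.

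It remains to check condition (a), i.e. $2d\ge 3\alpha-\beta+4$. I will split on the sign of $\alpha-\beta$.
\begin{itemize}
\item If $\beta\ge\alpha$, then $3\alpha-\beta+4\le 2\alpha+4\le 2d$, using $d\ge\alpha+2$ (again from $b_1\ge1$).
\item If $\beta<\alpha$, then $2d\ge 4\alpha-2\beta+8=(3\alpha-\beta+4)+(\alpha-\beta+4)>3\alpha-\beta+4$.
\end{itemize}

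With (a) and (b) in hand, $\theta_1\le d-\alpha-2$, which translates back to $\lambda_1\ge(2+\alpha)/d$, with equality if and only if $d=2\alpha-\beta+4$. The only real obstacle is spotting the factorization of $q(c)$; once $e=d-2\alpha+\beta-4$ appears, the rest is bookkeeping. The main conceptual point is that excluding the pentagon is exactly what is needed to rule out the boundary case $d=2\alpha-\beta+3$, in which the inequality would fail.
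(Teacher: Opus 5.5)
Your proof is correct and is essentially the paper's argument: conditions (a) and (b) are exactly the paper's two steps, namely checking $2d-3\alpha+\beta-4\ge 0$ and then squaring, since $(2d-3\alpha+\beta-4)^2-(\alpha-\beta)^2-4(d-\beta)=4q(c)=4(d-\alpha-1)(d-2\alpha+\beta-4)$. You also use the pentagon exclusion via Theorem~\ref{lem:d-lowbdd}(ii) and the bound $d\ge\alpha+2$ in the same places as the paper, and your case split for (a) is unnecessary, because $d\ge 2\alpha-\beta+4$ already gives $2d-3\alpha+\beta-4\ge d-\alpha>0$.
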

\begin{proof}
We first show that if $\lambda_1\le (2+\alpha)/d$, then $d= 2\alpha -\beta +4$.
Assume that $\lambda_1\le (2+\alpha)/d$.
According to Theorem \ref{lam}, we obtain
\begin{align}\label{4.1}
  2d-3\alpha+\beta-4\le \sqrt{(\alpha-\beta)^2+4(d-\beta)}.
\end{align}
%By Lemma \ref{lem:d-lowbdd}(i), we have $d\ge 2\alpha -\beta +3$. 
Since $G$ has diameter $2$ and is not the pentagon, we conclude that $d\ge 2\alpha -\beta +4$ by Theorem \ref{lem:d-lowbdd}(ii).
It follows that
\begin{align}\label{positive}
    2d-3\alpha+\beta-4\ge (2\alpha -\beta +4)+ d-3\alpha+\beta-4 =d-\alpha \ge0.
\end{align}
Thus, we can square both sides of the inequality \eqref{4.1}. This yields
\begin{align}\label{3}
    (2d-3\alpha+\beta-4)^2\le (\alpha-\beta)^2+4(d-\beta).
\end{align}
The above inequality can be rearranged as
$$4(d-\alpha-1)(d-2\alpha+\beta-4)\le0.$$
Since $G$ is not complete, we observe that $d\ge\alpha+2$. Therefore, we derive
\begin{align*}
  d\le 2\alpha-\beta+4.
\end{align*}
Together with the lower bound $d\ge 2\alpha -\beta +4$, we find $d= 2\alpha -\beta +4$.

We next show that if $d= 2\alpha -\beta +4$, then $\lambda_1= (2+\alpha)/d$.
Assume that $d= 2\alpha -\beta +4$.
Then inequality \eqref{3} holds with equality, and hence inequality \eqref{4.1} also holds with equality (recall from inequality \eqref{positive} that $2d-3\alpha+\beta-4\ge 0$).
Therefore, we have $\lambda_1= (2+\alpha)/d$.
\end{proof}

The following basic lemma describes how the adjacency spectrum of a graph transforms under taking its line graph.

\begin{lemma}[{\cite[Lemma 8.2.5]{GR01}}]\label{spec}
Let $H$ be a connected $d$-regular graph with $n$ vertices and $m$ edges. Let
\[
d= \mu_1 > \mu_2 \ge \cdots \ge \mu_n
\]
be the adjacency eigenvalues of $H$. Then the adjacency eigenvalues of the line graph $G$ of $H$ are:
\begin{itemize}
    \item $\theta_i = \mu_i + d - 2$ for $i = 1, 2, \dots, n$, and
    \item $\theta = -2$ with multiplicity $m - n$.
\end{itemize}
\end{lemma}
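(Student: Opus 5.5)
The statement to prove is Lemma \ref{spec}: the adjacency spectrum of the line graph $G$ of a connected $d$-regular graph $H$. I will prove it through the vertex–edge incidence matrix. Let $N$ be the $n\times m$ unsigned incidence matrix of $H$, with $N_{v,e}=1$ exactly when $v$ is an endpoint of $e$. Counting shared endpoints gives
\[
NN^{T}=A(H)+dI_n,\qquad N^{T}N=A(G)+2I_m .
\]
The first identity holds because two distinct vertices share exactly the edges joining them, and a vertex lies on $d$ edges. The second holds because two distinct edges share at most one endpoint, and each edge has two endpoints.

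Next I use the standard fact that $NN^{T}$ and $N^{T}N$ have the same nonzero eigenvalues, with the same multiplicities. The spectrum of $N^{T}N$ therefore consists of the nonzero values among $\mu_i+d$ ($1\le i\le n$), together with $0$ filling the remaining $m-n+\dim\ker(NN^{T})$ positions. Subtracting $2$ yields the eigenvalues of $A(G)$.

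The one step needing care is the bookkeeping when some $\mu_i=-d$, which happens exactly when $H$ is bipartite (connectedness makes the eigenvalue $-d$ simple). In that case $\mu_n+d-2=-2$, so this eigenvalue still appears in the list $\mu_i+d-2$. The count $m-n$ for the extra $-2$'s is then correct, since the zero eigenvalue of $NN^{T}$ is already listed as $\mu_n+d-2=-2$.

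To make this fully rigorous, I compare $\ker N^{T}N$ with $\ker NN^{T}$. Since $\operatorname{rank}N=\operatorname{rank}N^{T}$, we have $\dim\ker N^{T}N-\dim\ker NN^{T}=m-n$. This shows the eigenvalue $-2$ of $A(G)$ has multiplicity $(m-n)$ plus the number of $i$ with $\mu_i=-d$, which is exactly the stated list. I also need $m\ge n$, which holds since $m=nd/2$ and the case $d=1$ is trivial (one edge, $m-n=-1$); I would note this degenerate case separately or assume $d\ge2$, as the lemma implicitly does.
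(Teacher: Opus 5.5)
Your proof is correct and is essentially the standard incidence-matrix argument behind the cited Godsil--Royle lemma; the paper itself only quotes that result without proof. The identities $NN^{T}=A(H)+dI_n$ and $N^{T}N=A(G)+2I_m$, together with the rank count $\dim\ker N^{T}N-\dim\ker NN^{T}=m-n$, give exactly the stated spectrum, with any $\mu_i=-d$ correctly absorbed as $\mu_i+d-2=-2$ (simplicity of $-d$ is not actually needed here, and your remark that the multiset reading requires $d\ge 2$ is apt).
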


\begin{theorem}\label{ex}
    An amply regular graph with parameters $(n, d, \alpha, 1)$ is not Lichnerowicz sharp.
\end{theorem}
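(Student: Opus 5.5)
Suppose, for contradiction, that $G$ is amply regular with parameters $(n,d,\alpha,1)$ and Lichnerowicz sharp, so $\lambda_1=\min_{xy\in E}\kappa(x,y)$. By the discrete Lichnerowicz Theorem \ref{theorem:discreteBM}, sharpness forces $\lambda_1>0$ (since $G$ is connected) and hence every edge has positive Lin--Lu--Yau curvature. Corollary \ref{mu1} then applies. So $G$ is one of: the pentagon, the line graph of the Petersen graph, the line graph of the Hoffman--Singleton graph, or the line graph of a hypothetical strongly regular graph with parameters $(3250,57,0,1)$. The plan is to compute both sides of \eqref{Lsharp} for each candidate and show they differ.

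\emph{The pentagon.} Here $\lambda_1=1-\cos(2\pi/5)$, which is about $0.69$. Every edge $xy$ has $N_{xy}=\emptyset$, so Lemma \ref{up} gives $\kappa\le 1$, which is not enough. Instead I would use Lemma \ref{ARTG_up} with $d=2$ and $\alpha=0$, giving $\kappa(x,y)\le 1/2$. Since $1/2<0.69$, the pentagon is not sharp. (A direct transport computation gives exactly $1/2$, but the upper bound already suffices.)

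\emph{The line graphs.} Let $G$ be the line graph of a strongly regular graph $H$ with parameters $(m,k,0,1)$, where $k\in\{3,7,57\}$. By Lemma \ref{3.3}, every edge of $G$ has curvature $1/(2k-2)$, so $\min\kappa=1/(2k-2)$. For $\lambda_1$, Theorem \ref{lam} applied to $H$ (with $\alpha=0$, $\beta=1$) gives the second largest adjacency eigenvalue of $H$:
\[
\mu_2=\frac{-1+\sqrt{4k-3}}{2}.
\]
For $k=3,7,57$ this is $1,2,7$. Lemma \ref{spec} then says the second largest adjacency eigenvalue of $G$ is $\theta_1=\mu_2+k-2$, and $G$ is $(2k-2)$-regular. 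Hence
\[
\lambda_1=1-\frac{\theta_1}{2k-2}=\frac{k-\mu_2}{2k-2}.
\]
This equals $1/(2k-2)$ only if $\mu_2=k-1$. That would force $\sqrt{4k-3}=2k-1$, i.e.\ $k^2-2k+1=0$, i.e.\ $k=1$, which is excluded. Concretely, $\lambda_1=2/4$, $5/12$, and $50/112$ in the three cases, each strictly larger than $\kappa$. So none of the line graphs is sharp.

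I expect no real obstacle once Corollary \ref{mu1} is in hand. The points needing care are:
\begin{itemize}
\item justifying the reduction to positive curvature, which follows because $\lambda_1>0$ for a connected graph;
\item checking that Lemma \ref{spec} identifies $\theta_1$ correctly, i.e.\ that $\mu_2+k-2>-2$, so the $-2$ eigenvalues do not interfere;
\item treating the pentagon separately, since it is not a line graph in the form handled by Lemma \ref{3.3}.
\end{itemize}
For the $(3250,57,0,1)$ case the argument uses only the parameters, so existence of such a graph is irrelevant.
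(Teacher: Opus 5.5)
Your proof is correct and follows the paper's argument. You use $\lambda_1>0$ and Corollary \ref{mu1} to reduce to line graphs of strongly regular graphs with parameters $(m,k,0,1)$. You then compare $\lambda_1=(k-\mu_2)/(2k-2)$, obtained from Theorem \ref{lam} and Lemma \ref{spec}, with $\kappa=1/(2k-2)$ from Lemma \ref{3.3}, and equality forces $k=1$. Your separate treatment of the pentagon is valid but unnecessary: the pentagon is its own line graph and is strongly regular with parameters $(5,2,0,1)$, so it is the case $k=2$ of your uniform computation, exactly as the paper treats it.
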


\begin{proof}
    Assume, towards a contradiction, that $G=(V,E)$ is an amply regular graph with parameters $(n, d, \alpha, 1)$ which is Lichnerowicz sharp.
    Since $\lambda_1>0$, $G$ has positive Lin--Lu--Yau curvature.
    By Corollary \ref{mu1}, $G$ is the line graph of a strongly regular graph $H$ with parameters $(n',d',0,1)$. Then, we have $n= n'd'/2$, $d=2d'-2$, and $\alpha=d'-2$.
    By Theorem \ref{lam}, the second largest adjacency eigenvalue of $H$ is $(-1+\sqrt{4d'-3})/2$.
    Using Lemma \ref{spec}, we deduce that the second largest adjacency eigenvalue of $G$ is $d'-2+(-1+\sqrt{4d'-3})/2$.
    So, the smallest non-zero normalized Laplacian eigenvalue of $G$ is 
    \begin{align}\label{lamda1}
        \lambda_1=\frac{2d'+1-\sqrt{4d'-3}}{4d'-4}.
    \end{align}
    By Lemma \ref{3.3} and the assumption that $G$ is Lichnerowicz sharp, we have
    \begin{align}\label{lamda2}
        \lambda_1= \min_{xy\in E} \kappa(x,y)=\frac{1}{2d'-2}.
    \end{align}
    Combining equations \eqref{lamda1} and \eqref{lamda2}, we find $d'=1$, which is a contradiction.
\end{proof}

Next, we prove Theorem \ref{thm3}.
\begin{proof}[Proof of Theorem \ref{thm3}]
Let $G=(V,E)$ be a Lichnerowicz sharp distance-regular graph.
By Table \ref{table:Lichsharp_examples}, the complete graphs (i.e., $H(1,n)$, $n\geq 2$) are Lichnerowicz sharp.
Now, assume that $G$ is not complete.
According to Theorem \ref{ex}, we have $\beta >1$.
By Lemma \ref{up}, we deduce that
\begin{align}\label{1.4}
    \lambda_1=\min_{xy\in E}\kappa(x,y)\leq \max_{xy\in E}\kappa(x,y) \le \frac{2+\alpha}{d}.
\end{align}

\begin{table}[!ht]
\centering
\begin{tabular}{l|l|l|l|l|l}
$G$ & $|V|$ & Diameter & $\lambda_1$ & $\displaystyle{\inf_{xy\in E} \kappa(x,y)}$ & $\displaystyle{\frac{2+\alpha}{d}}$ \\[.1cm]
\hline &&&&& \\[-.2cm]

$H(d,n),d\ne 2,n\ge 2$ & $n^d$ & $d$ & $\frac{n}{d(n-1)}$ & $\frac{n}{d(n-1)}$ & $\frac{n}{d(n-1)}$ \\[.1cm]

${\rm Doob}^{n,m},n\ge 1,m\ge 1$ & $4^{n+2m}$ & $n+2m$ & $\frac{4}{3(n+2m)}$ & $\frac{2}{3(n+2m)}$ & $\frac{2}{3(n+2m)}$ \\[.1cm]

% $(7,2)$-Kneser & $21$ & $\min\{2,5\}=2$ & $\frac{7}{10}$ & $\frac{1}{2}$ & -- \\[.1cm]

Conway-Smith & $63$ & $3$ & $\frac{1}{2}$ & $-\frac{1}{10}$ & $\frac{1}{2}$ \\[.1cm]

Doro & $65$ & $3$ & $\frac{1}{2}$ & $-\frac{1}{10}$ & $\frac{1}{2}$ \\[.1cm]

$J(n,k),3\le k\le n-3$ & ${\binom{n}{k}}$ & $\min\{k,n-k\}$ & $\frac{n}{k(n-k)}$ & $\frac{n}{k(n-k)}$ & $\frac{n}{k(n-k)}$ \\[.1cm]

$Q^n_{(2)},n\ge 6$ & $2^{n-1}$ & $\left\lfloor \frac{n}{2}\right\rfloor$ & $\frac{4}{n}$ & $\frac{4}{n}$ & $\frac{4}{n}$ \\[.1cm]

Gosset & $56$ & $3$ & $\frac{2}{3}$ & $\frac{2}{3}$ & $\frac{2}{3}$\\[.1cm]

Icosahedron & $12$ & $3$ & $1 - \frac{\sqrt{5}}{5}$ & $\frac{2}{5}$ & $\frac{4}{5}$\\[.1cm]

\end{tabular}
\caption{Distance-regular graphs with second largest eigenvalue $\theta_1\geq b_1-1$, diameter not equal to 2, and $\beta \geq 2$}
\label{table:Lichsharp_examples}
\end{table}

We divide the proof into two cases according to whether the diameter of $G$ is at least three.

\noindent\textbf{Case 1:} $G$ has diameter 2 (that is, $G$ is a strongly regular graph).

Since $\beta >1$, $G$ is not the pentagon.
By Lemma \ref{thm:srg_eigenvalue} and inequality \eqref{1.4}, we have $\lambda_1=(2+\alpha)/d$. Applying Theorem \ref{lam}, we derive that the least adjacency eigenvalue of $G$ is $-d+2\alpha-\beta+2$. Inserting $d=2\alpha-\beta+4$ obtained from Lemma \ref{thm:srg_eigenvalue}, we conclude that the least adjacency eigenvalue of $G$ is $-2$.
Then, $G$ is one of the graphs listed in Theorem \ref{thm:1.1.1}.
By Table \ref{table1}, we check that $G$ is one of the following graphs: the cocktail party graphs $CP(n)$ for $n\ge 2$, the Hamming graphs $H(2,n)$ for $n\ge 2$, the triangular graphs $T(n)$ for $n\ge 4$, the Clebsch graph, and the Schl\"afli graph.

\begin{table}[!ht]
\centering
\begin{tabular}{l|l|l|l|l}
$G$ & $(n,d,\alpha,\beta)$ & $\lambda_1$ & $\displaystyle{\inf_{xy\in E} \kappa(x,y)}$ & $\displaystyle{\frac{2+\alpha}{d}}$ \\[.1cm] 
\hline &&&& \\[-.2cm]

$CP(n), n\geq 2$ & $(2n,2n-2,2n-4,2n-2)$ & $1$ & $1$ & $1$ \\[.1cm]

$H(2,n), n\geq 2$ & $(n^2,2(n-1),n-2,2)$ & $\frac{n}{2(n-1)}$ & $\frac{n}{2(n-1)}$ & $\frac{n}{2(n-1)}$ \\[.1cm]

$T(n), n\geq 4$ & $\left({\binom{n}{2}},2(n-2),n-2,4\right)$ & $\frac{n}{2(n-2)}$ & $\frac{n}{2(n-2)}$ & $\frac{n}{2(n-2)}$ \\[.1cm]

Shrikhande & $(16,6,2,2)$ & $\frac{2}{3}$ & $\frac{1}{3}$ & $\frac{2}{3}$ \\[.1cm]

Chang & $(28,12,6,4)$ & $\frac{2}{3}$ & $\frac{1}{3}$ & $\frac{2}{3}$ \\[.1cm]

Petersen & $(10,3,0,1)$ & $\frac{2}{3}$ & $0$ & $\frac{2}{3}$ \\[.1cm]

Clebsch ($Q^5_{(2)}$) & $(16,10,6,6)$ & $\frac{4}{5}$ & $\frac{4}{5}$ & $\frac{4}{5}$ \\[.1cm]

Schl\"afli & $(27,16,10,8)$ & $\frac{3}{4}$ & $\frac{3}{4}$ & $\frac{3}{4}$ \\[.1cm]
\end{tabular}
\caption{Strongly regular graphs with least adjacency eigenvalue $-2$}
\label{table1}
\end{table}

\noindent\textbf{Case 2:} $G$ has diameter at least 3.

By Table \ref{table:Lichsharp_examples}, the icosahedron is not Lichnerowicz sharp.
Therefore, $G$ is not the icosahedron.
According to Theorem \ref{thm:4.4.3}, we have $b^+:=\frac{b_1}{\theta_1+1}\geq 1$. Noticing that 
\[b^+=\frac{b_1}{\theta_1+1}=\frac{d-\alpha-1}{1+d-d\lambda_1},\]
we have 
$\lambda_1\ge (2+\alpha)/d$.
Together with inequality \eqref{1.4}, we find $\lambda_1=(2+\alpha)/d$, which is equivalent to $\theta_1=b_1-1$.
Then, $G$ is one of the graphs listed in Theorem \ref{thm:4.4.11} (note that the cases (i) and (ii) have been excluded by the assumption that $G$ has diameter at least 3 and $\beta >1$). By Table \ref{table:Lichsharp_examples}, we check that $G$ is one of the following graphs: the Hamming graphs $H(d,n)$ for $d\ge 3$ and $n\ge 2$, the Johnson graphs $J(n,k)$ for $3\le k\le n-3$, the demi-cubes $Q^n_{(2)}$ for $n\ge 6$, and the Gosset graph.
\end{proof}

\begin{remark}
    We notice that the triangular graphs $T(n)$ are special Johnson graphs $J(n,2)$. The Johnson graphs $J(n,1)=J(n,n-1)$ are the complete graphs $K_n$. The demi-cubes $Q^2_{(2)}$ and $Q^3_{(2)}$ are the complete graphs $K_2$ and $K_4$, respectively.
    The demi-cube $Q^4_{(2)}$ is the cocktail party graph $CP(4)$. The demi-cube $Q^5_{(2)}$ is the Clebsch graph.
\end{remark}

A direct corollary of Theorem \ref{thm3} is the classification of all Lichnerowicz sharp strongly regular graphs.

\begin{corollary}\label{srg}
   The Lichnerowicz sharp strongly regular graphs  are precisely the following ones: the cocktail party graphs $CP(n)$ for $n\ge 2$, the Hamming graphs $H(2,n)$ for $n\ge 2$, the triangular graphs $T(n)$ for $n\ge 4$, the Clebsch graph, and the Schläfli graph.
\end{corollary}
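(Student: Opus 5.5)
The plan is to deduce Corollary \ref{srg} directly from Theorem \ref{thm3}, since a strongly regular graph is precisely a distance-regular graph of diameter $2$. Indeed, a connected non-complete strongly regular graph with parameters $(n,d,\alpha,\beta)$ is distance-regular with intersection array $\{d,d-\alpha-1;1,\beta\}$. Conversely, a distance-regular graph of diameter $2$ is strongly regular. So the Lichnerowicz sharp strongly regular graphs are exactly those graphs in the list of Theorem \ref{thm3} that have diameter $2$.

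The main step is to go through that list and decide which members have diameter exactly $2$.
\begin{itemize}
\item $CP(n)$ with $n\ge 2$ always has diameter $2$.
\item $H(d,n)$ has diameter $d$, so only $H(2,n)$ with $n\ge 2$ qualifies; $H(1,n)=K_n$ is complete and is excluded.
\item $J(n,k)$ has diameter $\min\{k,n-k\}$, so we need $k=2$ or $k=n-2$. Both give the triangular graph $T(n)$, which is non-complete exactly when $n\ge 4$.
\item $Q^n_{(2)}$ has diameter $\lfloor n/2\rfloor$, so only $n=4,5$ qualify. These give $CP(4)$, which is already listed, and the Clebsch graph.
\item The Schl\"afli graph has diameter $2$, while the Gosset graph has diameter $3$.
\end{itemize}

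For the reverse direction, every graph in the resulting list is Lichnerowicz sharp because it appears in Theorem \ref{thm3}; this is also recorded in Table \ref{table1}. Alternatively, one can read the whole argument off Case 1 of the proof of Theorem \ref{thm3}, which produces exactly this list.

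The only delicate point is bookkeeping: recalling the diameters of the families and the coincidences among small members, which are noted in the preceding Remark. No real obstacle is expected.
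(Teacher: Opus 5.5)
Your proposal is correct and follows the paper's route. The paper states this corollary as a direct consequence of Theorem \ref{thm3}: strongly regular graphs are exactly the distance-regular graphs of diameter $2$, so filtering that list by diameter (or reading off Case 1 of its proof) gives $CP(n)$, $H(2,n)$, $T(n)$, the Clebsch graph, and the Schl\"afli graph. Your diameter bookkeeping is accurate, including the coincidences $Q^4_{(2)}\cong CP(4)$ and $J(n,2)\cong J(n,n-2)\cong T(n)$.
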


\iffalse
We recall the following classification result from \cite{CKKLMP20}.
      \begin{theorem} \cite[Theorem  6.4]{CKKLMP20} \label{LS-2}
    The Lichnerowicz sharp strongly regular graphs with $\lambda_1=\frac{2+\alpha}{d}$ are precisely the following ones: The
  cocktail party graphs $CP(n)$, $n \ge 2$, the lattice graphs
  $L_2(n) \cong K_n \times K_n$, $n \ge 3$, the triangular graphs
  $T(n) \cong J(n,2)$, $n \ge 5$, the demi-cube $Q^5_{(2)}$, and the
  Schl\"afli graph.
\end{theorem}
\fi

\section*{Acknowledgement}
We are grateful to the anonymous referees for their valuable suggestions.
We sincerely thank Florentin M\"{u}nch and one of the referees for pointing out the overlap between our results and M\"{u}nch's results on effective Bonnet--Myers sharp graphs.
We also thank Paul Terwilliger for pointing out the connection between Lichnerowicz sharp distance-regular graphs and the complete connected hypermetric spaces.
This work is supported by the National Key R \& D Program of China 2023YFA1010200 and the National Natural Science Foundation of China No. 12431004. K.C.'s research is supported by the National Natural Science Foundation of China No. 125B1009 and the New Lotus Scholars Program No. PB22000259. 

\section*{Appendix}
In the appendix, we provide references and explanations for the Lin--Lu--Yau curvature
values appearing in Tables \ref{table:Lichsharp_examples} and \ref{table1}.

For the cocktail party graphs \(CP(n)\), \(n\geq 2\), the Johnson graphs
\(J(n,k)\), \(1\leq k\leq n-1\), the demi-cubes \(Q^n_{(2)}\), \(n\geq 3\),
and the Gosset graph, the corresponding Lin--Lu--Yau curvature values were
computed in the extended arXiv version of \cite{CKKLMP20}; see Lemmas
4.1--4.4 therein. We also recall that the triangular graph \(T(n)\) is
isomorphic to the Johnson graph \(J(n,2)\).

The complete graph \(K_n\) on \(n\) vertices has constant Lin--Lu--Yau
curvature \(n/(n-1)\); see \cite[Example 1]{LLY11}, and also the more detailed
discussion in \cite[Section 3]{Bonini}. The Shrikhande graph \(Shk\) has
constant Lin--Lu--Yau curvature \(1/3\); see \cite[Comment after Theorem
1.6]{Bonini}.

The Hamming graph
\(
        H(d,n)=(K_n)^d, d\geq 1,\ n\geq 2,
\)
is the Cartesian product of \(d\) copies of \(K_n\). The Doob graph
\(
        \mathrm{Doob}^{n,m}=(K_4)^n \square (Shk)^m, n\geq 1,\ m\geq 1,
\)
is the Cartesian product of \(n\) copies of \(K_4\) and \(m\) copies of the
Shrikhande graph. Their Lin--Lu--Yau curvatures can be computed directly from
the following product formula of Lin, Lu and Yau \cite[Theorem 3.1]{LLY11}.

Recall that the Cartesian product \(G\square H\) of two graphs \(G\) and \(H\)
is the graph with vertex set \(V(G)\times V(H)\), where two vertices
\((u_1,v_1)\) and \((u_2,v_2)\) are adjacent if either
\(u_1=u_2\ \text{and}\ v_1v_2\in E(H),
\)
or
\(u_1u_2\in E(G) \ \text{and} \ v_1=v_2.
\) Suppose that \(G\) is \(d_G\)-regular and \(H\) is \(d_H\)-regular. Then the
Lin--Lu--Yau curvature of \(G\square H\) is given by
\begin{align}
\kappa^{G\square H}\bigl((u_1,v),(u_2,v)\bigr)
&=
\frac{d_G}{d_G+d_H}\,\kappa^G(u_1,u_2), \label{eq:cart-prod-1}\\
\kappa^{G\square H}\bigl((u,v_1),(u,v_2)\bigr)
&=
\frac{d_H}{d_G+d_H}\,\kappa^H(v_1,v_2), \label{eq:cart-prod-2}
\end{align}
where \(u\in V(G)\), \(v\in V(H)\), \(u_1u_2\in E(G)\), and
\(v_1v_2\in E(H)\).

Applying \eqref{eq:cart-prod-1} and \eqref{eq:cart-prod-2}, together with the
curvature values for \(K_n\) and \(Shk\), we obtain that the Hamming graph
\(H(d,n)\) has constant Lin--Lu--Yau curvature
\[
        \frac{n}{d(n-1)}.
\]
Similarly, the Doob graph \(\mathrm{Doob}^{n,m}\) has Lin--Lu--Yau curvature
values
\[
        \left\{
        \frac{4}{3(n+2m)},\,
        \frac{2}{3(n+2m)}
        \right\}.
\]

The Petersen graph is a strongly regular graph of girth \(5\), and has constant
Lin--Lu--Yau curvature \(0\); see, for example, \cite[Theorem 1.4]{Bonini}.

The Lin--Lu--Yau curvatures of the icosahedron, the Doro graph, the Chang
graphs, and the Schl\"afli graph can be computed directly from the formula of
M\"unch and Wojciechowski \cite[Theorem 2.6]{MW19}; see also
\cite[Theorem 1.1]{Hehl}.

For a given graph, the Lin--Lu--Yau curvature can also be computed using the
online Graph Curvature Calculator, available at
\url{https://www.mas.ncl.ac.uk/graph-curvature/}; see also
\url{https://skamtue.github.io/graph-curvature-webapp/}. By entering the
adjacency matrix of the graph, one obtains the Lin--Lu--Yau curvature of each
edge. Adjacency matrices of many distance-regular graphs are available from
DistanceRegular.org \cite{DistanceRegular}.

The online Graph Curvature Calculator is implemented in Python, and its design
is discussed in detail in \cite[Section 3]{CKLLS22}.

\end{document}